\newtheorem{thm}{Theorem}[section]
\newtheorem{lem}[thm]{Lemma}
\newtheorem{prop}[thm]{Proposition}
\theoremstyle{definition}
\newtheorem{exam}[thm]{Example}
\theoremstyle{remark}
\newtheorem{rem}[thm]{Remark}
\numberwithin{equation}{section}
\begin{document}

\title[2D Moore CA with new boundary conditions and its reversibility]{2D Moore CA with new boundary conditions and its reversibility}%
\author[1]{B.A. Omirov\textsuperscript{1}}
\author[2]{Sh.B. Redjepov\textsuperscript{2}}
\author[3]{J.B. Usmonov\textsuperscript{3}}
\address[1]{Institute for Advanced Study in Mathematics, Harbin Institute of Technologies, Harbin, 150001, China}%
\email{omirovb@mail.ru}%
\address[2]{Tashkent University of Information Technologies, Amir Temur street,
100200, Tashkent,  Uzbekistan}%
\email{sh.redjepov@gmail.com}%
\address[3]{National University of Uzbekistan, University street, 100174, Tashkent, Uzbekistan}%
\email{javohir0107@mail.ru}%

\subjclass[2010]{37B15, 68Q80}%
\keywords{Cellular Automata; Boundary conditions; Rule matrix; Reverisibility}%

\begin{abstract}
 In this paper, under certain conditions we consider two-dimensional cellular automata with the Moore neighborhood. Namely, the characterization of 2D linear cellular automata defined by the Moore neighborhood with some mixed boundary conditions over the field $\mathbb{Z}_{p}$ is studied. Furthermore, we investigate the rule matrices of 2D Moore CA under some mixed boundary conditions  by applying rotation. Finally, we give the conditions under which the obtained rule matrices for  2D finite CAs are reversible.
\end{abstract}
\maketitle

\section{Introduction}\label{sec1}

It is known that a cellular automaton (CA) is a set of cells arranged in a grid of a specific shape. Each cell changes its state over time based on a predetermined set of rules determined by the states of neighboring cells. Cellular automata (CAs) have been proposed for potential applications in public-key cryptography, as well as in the fields of geography, anthropology, political science, sociology, physics, and others (refer to \cite{Adamatzky1995}, \cite{Adamatzky1999}, \cite{Adamatzky2010}).
 Cellular automata were studied in the early 1950s as a possible model for biological systems by J. Von
Neumann and Stan Ulam (\cite{Neumann1966}, \cite{Ulam1962}, \cite{Wolfram2002}).  Two most common types of CA used by different authors are: one-dimensional CA (1D CA) and two-dimensional CA (2D CA). As a famous example of 2D CA, John Conway's Game of Life (also known simply as Life) is a two-dimensional, totalistic CA that introduces more complexity than an elementary CA, since each cell in the grid has a bigger neighborhood. It is a computation-universal CA since it can effectively emulate any CA, Turing machine, or other systems that can be translated into a system known to be universal (\cite{game}, \cite{turing}). If the grid is a linear array of cells, is called {\it 1D CA} and if it is a rectangular or hexagonal grid of cells then it is called {\it 2D CA}. One-dimensional cellular automata have now been investigated in several ways \cite{Wolfram1984}.  A CA with one central cell and four near neighborhood cells is called {\it a von Neumann neighborhood CA} whereas a CA having one central cell and eight near neighborhood cells is called {\it the Moore neighborhood CA}.

A configuration of the system is an assignment of states to all the cells. Every configuration determines the next configuration via a transition rule that is local in the sense that the state of a cell at time $(t + 1)$ depends only on the states of some of its neighbors at time $t$. When the  transition rule is linear and under some boundary conditions there are several results (see \cite{Uguz2017}). Usually, 2D CA is considered with triangular, square, hexagonal, and pentagonal lattices (see \cite{Bays2009}, \cite{Siap2011}, \cite{Uguz2013}, \cite{Uguz2019}, \cite{Jumaniyozov2023}). In the paper \cite{Uguz2021}  investigated the evolution of image patterns corresponding to the uniform linear rules of 2D CA with the reflexive and adiabatic boundary conditions over $\mathbb{Z}_2$. Moreover, the linear rules of CA can be found to be some image copies of a given first image depending on the special boundary types. The mathematical representation of 2D finite cellular automata (CA) allows us to determine the description of the studied CA. The more critical aspect is determining the reversibility or irreversibility of these CAs. The reversibility is the important character of the CAs which characterizes the non existence of Gardens of Eden. A reversible cellular automaton is a cellular automaton in which every configuration has a unique predecessor. It has been demonstrated that determining the reversibility of cellular automata (CA) for dimensions greater than or equal to two is undecidable (see \cite{Kari1990},  \cite{Kari1994}, \cite{Siap2011}, \cite{Uguz2013}, \cite{Uguz2017}). This implies that, in general, obtaining the inverse of a given cellular automaton (CA) for higher dimensions through an algorithm is unattainable due to its complex structure. Consequently, it can be observed that determining inverses or cases of reversibility for 2D finite CAs is a complex challenge in the general scenario (\cite{14}).

In this paper, we study 2D linear CA for Moore neighbors on the square lattice. Since the local rule is a linear function, we obtain transition rules as matrices. Then we investigated CA under new types of boundary conditions with the p-state spin value case, i.e., over the field $\mathbb{Z}_p$. We obtain the transition rule matrices of the Moore finite CA over some mixed boundary conditions. Then we give the algorithm for computing the rank of obtained rule matrices for the Moore neighborhood. Finally, we give the conditions under which the obtained rule matrices of  2D finite CAs are reversible.

\section{Preliminary}
The 2D finite CA consists of $m\times n$ cells arranged in $m$ rows and $n$ columns, where each cell takes one of the
values of the field $\mathbb{Z}_p$. From now on, we will denote 2D finite CA order to $m\times n$ by 2D $\rm{CA}_{m\times n}$. A configuration of the system is an assignment of the states to all cells. Every configuration determines a next configuration
via a linear transition rule that is local in the sense that the state of a cell at time $(t + 1)$ depends only on the
states of some of its neighbors at the time $t$ using modulo $p$ algebra.

\begin{figure}[h]
  \centering
  \includegraphics[width=0.65\textwidth]{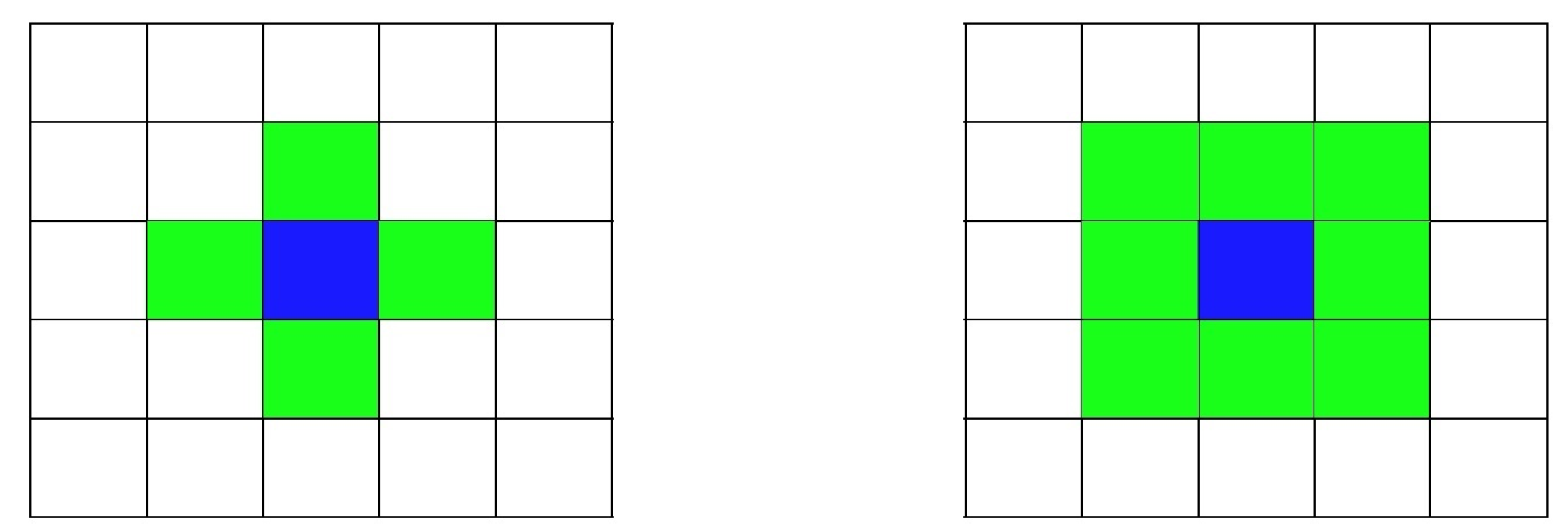}\\
  \caption{von Neumann and Moore neighborhoods}\label{neighbors}
\end{figure}

\subsection{The von Neumann and Moore neighborhood on CA lattice.}

In 2D CA's theory, there are some classic types of neighborhoods, but in this paper we only restrict ourselves to the Moore neighborhood. This neighborhood was used in the well known Conway's Game of Life. It is similar to the notion of 8-connected pixels in computer graphics. In Figure \ref{neighbors}, we illustrate the von Neumann and Moore neighborhoods. The von Neumann neighborhood the center cell is surrounded by four square cells (see Figure \ref{neighbors} (left)). The Moore neighborhood comprises eight square cells which surround the center cell $x_{(i;j)}$ (see Figure \ref{neighbors} (right)). From now on, we deal only with Moore neighborhood. Then the state $x^{(t+1)}_{i;j}$ of the cell $(i; j)$th at time $(t + 1)$ is defined by the local rule function $\Psi:\mathbb{Z}^8_p\rightarrow\mathbb{Z}_p$ as follows:

\begin{equation}{\label{1}}
\begin{array}{ccc}
  x_{i,j}^{(t+1)}&=\Psi(x_{i-1,j-1},x_{i-1,j},x_{i-1,j+1},x_{i,j+1},x_{i+1,j+1},x_{i+1,j},x_{i+1,j-1},x_{i,j-1}) &  \\
  &=ax_{i-1,j-1}^{(t)}+bx_{i-1,j}^{(t)}+cx_{i-1,j+1}^{(t)}+dx_{i,j+1}^{(t)}+ex_{i+1,j+1}^{(t)}+fx_{i+1,j}^{(t)} &  \\
   & +gx_{i+1,j-1}^{(t)}+hx_{i,j-1}^{(t)} \quad\quad\quad\quad\quad\quad\quad\quad\quad\quad\quad\quad\quad (\textrm{mod } p)
\end{array}
\end{equation}
where $a,b,c,d,e,f,g,h\in\mathbb{Z}_p^*=\mathbb{Z}_p\setminus\{0\}$

The value of each cell for the next state may not depend upon all eight neighbors.

\begin{rem}\label{rem1}
If we assume $a=c=e=g=0$, then all obtained above results hold for von Neumann neighborhood.
\end{rem}

Note that it is impossible to simulate a truly infinite lattice on a computer (unless the active region always remains finite). Therefore, we have to prescribe some boundary conditions (BC). Regarding the neighborhood of the boundary cells, four approaches exist:
\begin{itemize}
	\item If the boundary cells are connected to 0-state, then CA is called {\it null boundary (NB) CA} (see Table~\ref{tab:T1}).

	\item If the boundary cells are adjacent to each other, then CA is called {\it periodic boundary (PB) CA} (see Table~\ref{tab:T2}).

\item {\it An Adiabatic Boundary (AB) CA} is duplicating the value of the cell in an extra virtual neighbor (see Table~\ref{tab:T3}).

\item {\it A Reflexive Boundary (RB) CA} is designed for the value of the left and right neighbors to be equal concerning  the boundary cell (see Table~\ref{tab:T4}).
\end{itemize}

\begin{table}[h!]
\footnotesize
\caption{Null boundary condition on a 2D finite CA$_{3\times 3}$.}\label{tab:T1}
\begin{center}
  {\begin{tabular}{llllllllllllllllllll}\\[-2pt]

0 & \vline & & 0 && 0 && 0 & \vline  & 0\\
		\hline
		0 & \vline && $x_{(i-1,j-1)}$ && $x_{(i-1,j)}$ && $x_{(i-1,j+1)}$ & \vline & 0\\
		0 & \vline && $x_{(i,j-1)}$ && $x_{(i,j)}$ && $x_{(i,j+1)}$ & \vline & 0\\
		0 & \vline && $x_{(i+1,j-1)}$ && $x_{(i+1,j)}$ && $x_{(i+1,j+1)}$ & \vline & 0\\
		\hline
		0 & \vline && 0 && 0 && 0 & \vline & 0\\

\end{tabular}}
\end{center}
\end{table}

\begin{table}[h!]
\footnotesize
\caption{Periodic boundary condition on a 2D finite CA$_{3\times 3}$.}\label{tab:T2}
\begin{center}
{\begin{tabular}{llllllllllllllllllll}\\[-2pt]

		$x_{(i+1,j+1)}$ & \vline & & $x_{(i+1,j-1)}$ && $x_{(i+1,j)}$ && $x_{(i+1,j+1)}$ & \vline  & $x_{(i+1,j-1)}$\\
		\hline
		$x_{(i-1,j+1)}$ & \vline && $x_{(i-1,j-1)}$ && $x_{(i-1,j)}$ && $x_{(i-1,j+1)}$ & \vline & $x_{(i-1,j-1)}$\\
		$x_{(i,j+1)}$ & \vline && $x_{(i,j-1)}$ && $x_{(i,j)}$ && $x_{(i,j+1)}$ & \vline & $x_{(i.j-1)}$\\
		$x_{(i+1,j+1)}$ & \vline && $x_{(i+1,j-1)}$ && $x_{(i+1,j)}$ && $x_{(i+1,j+1)}$ & \vline & $x_{(i+1,j-1)}$\\
		\hline
		$x_{(i-1,j+1)}$ & \vline && $x_{(i-1,j-1)}$ && $x_{(i-1,j)}$ && $x_{(i-1,j+1)}$ & \vline & $x_{(i-1,j-1)}$\\
\end{tabular}}
\end{center}
\end{table}

\begin{table}[h!]
\footnotesize
\caption{ Adiabatic boundary condition on a 2D finite CA$_{3\times 3}$.}\label{tab:T3}
\begin{center}
{\begin{tabular}{llllllllllllllllllll}\\[-2pt]
	
		$x_{(i-1,j-1)}$ & \vline & & $x_{(i-1,j-1)}$ && $x_{(i-1,j)}$ && $x_{(i-1,j+1)}$ & \vline  & $x_{(i-1,j+1)}$\\
		\hline
		$x_{(i-1,j-1)}$ & \vline && $x_{(i-1,j-1)}$ && $x_{(i-1,j)}$ && $x_{(i-1,j+1)}$ & \vline & $x_{(i-1,j+1)}$\\
		$x_{(i,j-1)}$ & \vline && $x_{(i,j-1)}$ && $x_{(i,j)}$ && $x_{(i,j+1)}$ & \vline & $x_{(i,j+1)}$\\
		$x_{(i+1,j-1)}$ & \vline && $x_{(i+1,j-1)}$ && $x_{(i+1,j)}$ && $x_{(i+1,j+1)}$ & \vline & $x_{(i+1,j+1)}$\\
		\hline
		$x_{(i+1,j-1)}$ & \vline && $x_{(i+1,j-1)}$ && $x_{(i+1,j)}$ && $x_{(i+1,j+1)}$ & \vline & $x_{(i+1,j+1)}$\\
\end{tabular}}
\end{center}
\end{table}

\begin{table}[h!]
\footnotesize
\caption{  Reflexive boundary condition on a 2D finite CA$_{3\times 3}$.}\label{tab:T4}
\begin{center}
{\begin{tabular}{llllllllllllllllllll}\\[-2pt]

		$x_{(i,j)}$ & \vline & & $x_{(i,j-1)}$ && $x_{(i,j)}$ && $x_{(i,j+1)}$ & \vline  & $x_{(i,j)}$\\
		\hline
		$x_{(i-1,j)}$ & \vline && $x_{(i-1,j-1)}$ && $x_{(i-1,j)}$ && $x_{(i-1,j+1)}$ & \vline & $x_{(i-1,j)}$\\
		$x_{(i,j)}$ & \vline && $x_{(i,j-1)}$ && $x_{(i,j)}$ && $x_{(i,j+1)}$ & \vline & $x_{(i,j)}$\\
		$x_{(i+1,j)}$ & \vline && $x_{(i+1,j-1)}$ && $x_{(i+1,j)}$ && $x_{(i+1,j+1)}$ & \vline & $x_{(i+1,j)}$\\
		\hline
		$x_{(i,j)}$ & \vline && $x_{(i,j-1)}$ && $x_{(i,j)}$ && $x_{(i,j+1)}$ & \vline & $x_{(i,j)}$\\
\end{tabular}}
\end{center}
\end{table}

\section{The rule matrix of Moore CA and mixed boundary condition associated with non-bijective map}

Now, we can study the rule matrix under null boundary conditions. In order to characterize the corresponding rule, first we represent each matrix of size $m\times n$ as a column vector of size $mn\times1$. If the same rule is applied to all the cells in each evaluation, then those CA is called {\it uniform or regular}. Throughout the paper we deal with uniform CA.

Thus, the problem of finding a rule matrix of the corresponding rule is taken from the space of $m\times n$ matrices to the
space of $\mathbb{Z}_{p}^{mn}$.  In order to describe this problem more detailly we define the following map:
\[ \Phi \colon \mathbf{M}_{m\times n}(\mathbb{Z}_{p})\longrightarrow\mathbf{M}_{mn\times 1}(\mathbb{Z}_{p})\]
which takes the $t$-th state $X^{(t)}$ given by
\begin{equation}{\label{eq2}}
	C(t):=\begin{pmatrix}
		x_{11}^{(t)} & x_{12}^{(t)} & \dots & x_{1n}^{(t)}\\
		x_{21}^{(t)} & x_{22}^{(t)} & \dots & x_{2n}^{(t)}\\
		\vdots & \vdots & \vdots & \vdots\\
		x_{m1}^{(t)} & x_{m2}^{(t)} & \dots & x_{mn}^{(t)}\\
	\end{pmatrix}\longrightarrow X^{(t)}:=(x_{11}^{(t)},\dots,x_{1n}^{(t)},\dots,x_{m1}^{(t)},\dots,x_{mn}^{(t)})^{T}.
\end{equation}
where the superscript $T$ denotes the transpose and $\mathbf{M}_{m\times n}(\mathbb{Z}_{p})$ is the set of matrices with entries $\{0, 1, 2, \dots,p-1\}$.


Thus, local rules will be assumed to act on $\mathbb{Z}_{p}^{mn}$ rather than $\mathbf{M}_{m\times n}(\mathbb{Z}_{p})$.  The matrix
$C(t)$
is called \textit{the configuration matrix (or information matrix)} of the 2-D finite CA at the time $t$ and $C(0)$ is the initial information matrix of the 2-D finite CA. Therefore, one can conclude that $\Phi(C(t))=X^{(t)}$.

Using the identification (\ref{eq2}),  we can define
\[T_{\mathrm{R}}\cdot X^{(t)}=X^{(t+1)} \quad (\mathrm{mod}\ p ).\]

 This matrix $T_{\mathrm{R}}$ is called \textit{the rule matrix of 2D CA} such that  $T_{\mathrm{R}}$ operating on the current CA state $X^{(t)}$  generates the next state $X^{(t+1)}$.

Let $e_{i,j}\in \mathbf{M}_{m\times n}(\mathbb{Z}_{p}^{*})$ be the matrix units. Consider the following two sets:
$$X=\left\{e_{i,j}, \ 1\leq i\leq n, \ 1\leq j\leq m \right\},$$ $$Y=\left\{e_{0,i},e_{m+1,i},e_{j,0},e_{j,n+1}, \ 0\leq i\leq n, \ 0\leq j\leq m \right\}.
$$
We define $\alpha,\eta,\pi,\rho \colon Y\to X$ mappings by the boundary conditions in the tables \ref{tab:T1}, \ref{tab:T2}, \ref{tab:T3} and \ref{tab:T4}. Namely, $\alpha$ is adiabatic BC, $\eta$ is null BC, $\pi$ is periodic BC and $\rho$ is reflexive BC.

	 Now consider a mapping $\varphi \colon \Gamma\to\Gamma$ where $\Gamma=\left\{\alpha,\eta,\pi,\rho\right\}$.Then we study the CA under boundary conditions that depends on the mapping $\varphi$. In other words, the boundary cells are evaluated depending upon $\varphi(x), \ x\in\Gamma$ (see Figure~\ref{CAS}).  In the paper \cite{Jumaniyozov20231} for the bijective function $\varphi$ the characterization problem of 2D finite von Neumann CA is completely solved. Let us define $\phi $, $\psi$, $\tau$, $\sigma$, $\lambda$, and $\xi$ are non-bijective maps on $\Gamma$ as
\begin{equation}\label{nrfunc0}
\phi(\alpha)=\phi(\rho)=\eta, \ \ \phi(\eta)=\phi(\pi)=\rho,
\end{equation}

\begin{equation}\label{npfunc0}
\psi(\alpha)=\psi(\rho)=\eta, \ \ \psi(\eta)=\psi(\pi)=\pi,
\end{equation}

\begin{equation}\label{nafunc0}
\tau(\alpha)=\tau(\rho)=\eta, \ \ \tau(\eta)=\tau(\pi)=\alpha,
\end{equation}

\begin{equation}\label{rafunc0}
\sigma(\alpha)=\sigma(\rho)=\rho, \ \ \sigma(\eta)=\sigma(\pi)=\alpha,
\end{equation}

\begin{equation}\label{rpfunc0}
\lambda(\alpha)=\lambda(\rho)=\rho, \ \ \lambda(\eta)=\lambda(\pi)=\pi,
\end{equation}

\begin{equation}\label{pafunc0}
\xi(\alpha)=\xi(\rho)=\pi, \ \ \xi(\eta)=\xi(\pi)=\alpha,
\end{equation}
where each map describes some mixed boundary condition for the finite 2D CA.

Note that if we consider Moore neighborhood under the condition (\ref{nrfunc0}) there is ambiguity with setting boundary condition in the cells  $x_{0,n+1}$, $x_{m+1,0}$. In order to distinguish this unclearness on the condition (\ref{nrfunc0}) we define \textit{strong left null} and \textit{strong right reflexive} boundary conditions for those cells, i.e.  there is null boundary condition in the cells  $x_{0,n+1}$ and $x_{0,n}$, also the cells $x_{m+1,0}$ and $x_{m+1,1}$ have reflexive boundary condition. Later this progress carries out for the boundary conditions (\ref{npfunc0}), (\ref{nafunc0}), (\ref{rafunc0}), (\ref{rpfunc0}) and (\ref{pafunc0}).

\begin{figure}[h]
	\centering
	\includegraphics[width=7cm]{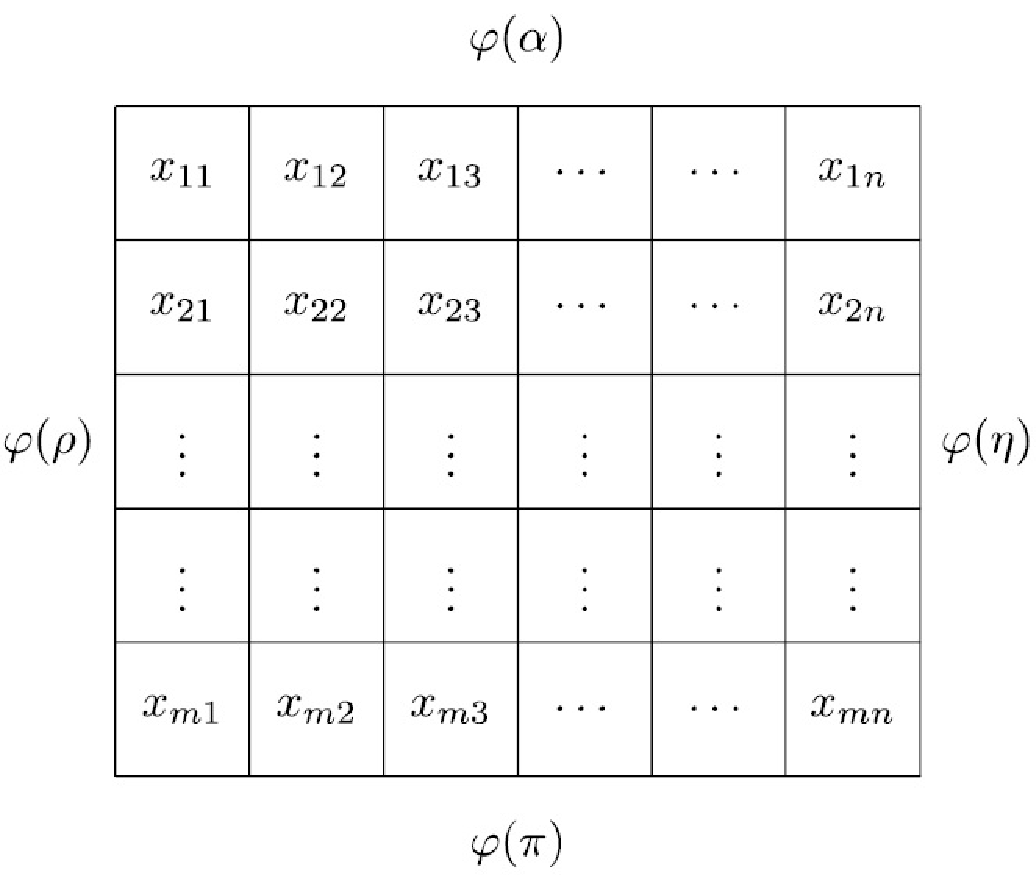}\caption{Mixing boundary condition}
	\label{CAS}
\end{figure}

To establish the transition rule matrix $T_{\text{R}}$ structure, it is needed to specify the action of	$T_{\text{R}}$ on the basis matrices $e_{i,j}$, respectively. Firstly, let us take the linear transition $T_{R}$ from $m\times n$ matrix space structure to itself. The images $T_{R}(e_{i,j})$ of $e_{i,j}$ are connected to the four nearest neighbor elements considering the Moore neighborhood. Note that the boundary condition $\phi$ does not play role for non-border cells. Hence, $T_{R}(e_{i,j})$ elements are equal to a linear sum of its eight neighbor elements. Thus, for non-border elements we have

\begin{equation}\label{eij}
  \begin{aligned}
  T_{R}(e_{i,j})&=ae_{i-1,j-1}+be_{i-1,j}+ce_{i-1,j+1}+de_{i,j+1}+ee_{i+1,j+1} \\
  &+ fe_{i+1,j}+ge_{i+1,j-1}+he_{i,j-1}.
  \end{aligned}
\end{equation}

Now, we define the action of $T_{R}$ on the border elements. All border cells have three neighbors out of the configuration, but we should define what is the boundary condition in the neighbor cells $e_{0,0},e_{0,n+1},e_{m+1,0},e_{m+1,n+1}$  of $e_{1,1},e_{1,n},e_{m,1},e_{m,n}$ out of the configuration. Without loss of generality, we obtain
$$\begin{array}{lllll}
T_{R}^{\phi}(e_{1,1})&=&d e_{1,2}+e e_{2,2}+f e_{2,1},\\
T_{R}^{\phi}(e_{1,n})&=&(h+d) e_{1,n-1}+(e+g) e_{2,n-1}+f e_{2,n},\\
T_{R}^{\phi}(e_{m,1})&=&(b+f)e_{m-1,1}+(c+e+g) e_{m-1,2}+d e_{m,2},\\
T_{R}^{\phi}(e_{m,n})&=&(a+c+e+g)e_{m-1.n-1}+(b+f)e_{m-1,n}+(d+h)e_{m,n-1},
\end{array}$$

where $a,b,c,d,e,f,g,h\in\mathbb{Z}_{p}$.

Moreover, the border elements excepting $e_{1,1},e_{1,n},e_{m,1},e_{m,n}$ have three neighbors out of the configuration. Thus, we get the following:
$$\begin{array}{lllll}
T_{R}^{\phi}(e_{1,i})&=&d e_{1,i+1}+e e_{2,i+1}+f e_{2,i}+g e_{2,i-1}+h e_{1,i-1},\\
T_{R}^{\phi}(e_{m,i})&=&(a+g)e_{m-1,i-1}+(b+f)e_{m-1,i}+(c+e)e_{m-1,i+1}\\
&&+d e_{m,i+1}+h e_{m,i-1},\\
T_{R}^{\phi}(e_{j,1})&=&b e_{j-1,1}+c e_{j-1,2}+d e_{j,2}+e e_{j+1,2}+f e_{j+1,1},\\
T_{R}^{\phi}(e_{j,n})&=&(a+c)e_{j-1,n-1}+b e_{j-1,n}+(d+h)e_{j,n-1}\\
&&+(e+g)e_{j+1,n-1}+f e_{j+1,n},
\end{array}$$
where $2\leq i\leq n-1, \ 2\leq j\leq m-1$ and $a,b,c,d,e,f,g,h\in\mathbb{Z}_{p}$.

Set
$$
P=\begin{pmatrix}
			0 & 1 & 0 & \dots & 0 & 0\\
			0 & 0 & 1 & \dots & 0 & 0\\
			0 & 0 & 0 & \dots & 0 & 0\\
			\vdots & \vdots & \vdots & \vdots & \vdots & \vdots\\
			0 & 0 & 0 & \dots & 0 & 1\\
			0 & 0 & 0 & \dots & 0 & 0\\
		\end{pmatrix}, \ \ Q=\begin{pmatrix}
			0 & 0 & 0 & \dots & 0 & 0\\
			1 & 0 & 0 & \dots & 0 & 0\\
			0 & 1 & 0 & \dots & 0 & 0\\
			\vdots & \vdots & \vdots & \vdots & \vdots & \vdots\\
			0 & 0 & 0 & \dots & 0 & 0\\
			0 & 0 & 0 & \dots & 1 & 0\\
		\end{pmatrix},$$

$$  A=dP+hQ,\quad  B=fI+eP+gQ,\quad C=bI+cP+aQ,$$
where $P, Q, I\in\mathbf{M}_{n\times n}(\mathbb{Z}_{p})$, $I$ is the identity matrix and $a,b,c,d, e, f, g, h\in\mathbb{Z}_{p}^{*}$.

Let $\phi$ be the function in (\ref{nrfunc0}), then the following result is true.

\begin{thm}\label{thm0} Let $T_{\mathrm{R}}^{\phi} \colon \mathbb{Z}_{p}^{mn} \to \mathbb{Z}_{p}^{mn}$ be  the rule matrix which takes the finite Moore CA over the configuration $C(t)$ of order $m\times n$ to the configuration $C(t+1)$ under the boundary condition of ${\phi}$. Then $T_{\mathrm{R}}^{\phi}$ has the following matrix form:
	
	\begin{equation}\label{T0}
		T_{\mathrm{R}}^{\phi}=\begin{pmatrix}
			A_1 & B_1 & O & O &\dots  &O& O & O\\
			C_1 & A_1 & B_1 &O& \dots &O & O & O\\
			O & C_1 & A_1 & B_1 &\dots &O & O & O\\
			\vdots & \vdots & \vdots & \vdots &\ddots &\vdots & \vdots & \vdots\\
			O & O & O &O&  \dots  &C_1 & A_1 & B_1\\
			O & O & O & O & \dots  &O& D_1 & A_1\\
		\end{pmatrix},
	\end{equation}
		where $  A_1=A+d\epsilon_{n,n-1}, \ \ B_1=B+e \epsilon_{n,n-1}, \ \  C_1=C+c\epsilon_{n,n-1}, \ \ D_1=B+C+(c+e)\epsilon_{n,n-1}+g\epsilon_{1,2},$\\
$O, \epsilon_{1,2}, \epsilon_{n,n-1}\in \mathbf{M}_{n\times n}(\mathbb{Z}_{p}), \, O\, \mbox{is the zero matrix and} \, \epsilon_{1,2}, \epsilon_{n,n-1} \mbox{are unit matrices}.$
\end{thm}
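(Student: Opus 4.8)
The plan is to verify directly that the linear map $T_{\mathrm{R}}^{\phi}$, defined cell-by-cell in the displays preceding the statement, assembles into the claimed block-tridiagonal matrix when we use the identification $\Phi$ of (\ref{eq2}). First I would fix the ordering convention: $\Phi$ lists the configuration row by row, so $X^{(t)}$ splits naturally into $m$ consecutive blocks of length $n$, the $j$-th block being the $j$-th row of $C(t)$. Consequently $T_{\mathrm{R}}^{\phi}$ is naturally an $m\times m$ array of $n\times n$ blocks, where the $(j,k)$-block records how row $j$ of $C(t+1)$ depends on row $k$ of $C(t)$. Because the Moore rule (\ref{1}) only couples row $j$ to rows $j-1,j,j+1$, every block outside the tridiagonal band is $O$; this disposes of the "$O$'' entries immediately.

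Next I would compute the three nonzero blocks in a generic interior row, i.e. for $2\le j\le m-1$. Reading off (\ref{eij}) together with the border formulas $T_{R}^{\phi}(e_{j,1})$ and $T_{R}^{\phi}(e_{j,n})$: the dependence of row $j$ on row $j$ is through the coefficients $h$ (west), $d$ (east) and the diagonal $f$ at the corrected endpoints, which is exactly $A=dP+hQ$ plus the extra term $d\,\epsilon_{n,n-1}$ coming from the reflexive correction on the right edge — giving $A_1$. Similarly the dependence on row $j+1$ is governed by $f,e,g$, matching $B=fI+eP+gQ$ with the right-edge correction $e\,\epsilon_{n,n-1}$, i.e. $B_1$; and the dependence on row $j-1$ is governed by $b,c,a$, matching $C=bI+cP+aQ$ with correction $c\,\epsilon_{n,n-1}$, i.e. $C_1$. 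Here one must be careful that $P$ and $Q$ are exactly the shift operators realizing the maps $e_{j,i}\mapsto e_{j,i\pm 1}$ on row index $i$, and that the $(1,1)$ and $(n,n)$ entries of $P,Q$ vanish, so that the "missing neighbor'' at columns $1$ and $n$ is automatically handled except for the genuinely new boundary contribution, which is precisely the $\epsilon$-correction. I would check the two boundary columns against $T_{R}^{\phi}(e_{j,1})$ and $T_{R}^{\phi}(e_{j,n})$ entry by entry to confirm the placement of the correction term in row $n$, column $n-1$.

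Then I would handle the first and last rows separately. For $j=1$ the only surviving block-couplings are to rows $1$ and $2$; comparing $T_{R}^{\phi}(e_{1,i})$, $T_{R}^{\phi}(e_{1,1})$, $T_{R}^{\phi}(e_{1,n})$ with the generic formulas shows that the $(1,1)$-block is again $A_1$ and the $(1,2)$-block is again $B_1$, because the strong-left-null convention kills exactly the terms $a,b,c$ that would have produced a $C$-type coupling — so the top row of blocks is $(A_1\ B_1\ O\ \cdots)$ as claimed. For $j=m$ the couplings are to rows $m-1$ and $m$; the $(m,m)$-block is $A_1$ as before, while the $(m,m-1)$-block must absorb both the usual $C$-coupling and the reflexive-boundary duplication of row $m$ onto the virtual row $m+1$, which contributes the $f,e,g$ terms — hence $D_1=B+C+(c+e)\epsilon_{n,n-1}+g\epsilon_{1,2}$, the extra $\epsilon_{1,2}$ term coming from the corner correction at $e_{m,1}$ visible in the coefficient $(c+e+g)$ of $e_{m-1,2}$ in $T_{R}^{\phi}(e_{m,1})$. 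I expect the main obstacle to be purely bookkeeping: correctly matching the eight directional coefficients $a,\dots,h$ to the pair (block position, within-block shift) under the row-major identification, and tracking how the "strong'' one-sided boundary conventions for the ambiguous corner cells $x_{0,n+1}$ and $x_{m+1,0}$ force the asymmetry between the top block-row (pure $A_1,B_1$) and the bottom block-row ($D_1,A_1$). Once the dictionary between neighbor directions and the operators $I,P,Q,\epsilon_{1,2},\epsilon_{n,n-1}$ is pinned down, every block identity is a one-line comparison, and the theorem follows.
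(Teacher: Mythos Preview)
Your proposal is correct and follows essentially the same route as the paper's proof: both proceed by reading off the action of $T_{\mathrm{R}}^{\phi}$ on the basis elements $e_{i,j}$ (interior, edge, and corner cells) and assembling the resulting coefficients into the block-tridiagonal form under the row-major identification $\Phi$. Your presentation is slightly more structured---arguing locality $\Rightarrow$ block-tridiagonal first, then computing the generic interior blocks, then the first and last block-rows---but the verification is identical; one small slip is the stray mention of ``the diagonal $f$'' in the discussion of $A_1$ (there is no $f$-term in $A_1=dP+hQ+d\epsilon_{n,n-1}$), which you should drop.
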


\begin{proof}
Firstly, let us take the linear transition $T_{R}^\varphi \colon \mathbf{M}_{m\times n}(\mathbb{Z}_{p})\to \mathbf{M}_{m\times n}(\mathbb{Z}_{p})$.  The image $T_{R}^\varphi(e_{i,j})$ of $e_{i,j}$ is connected to the four nearest neighbor elements considering the von Neumann neighborhood. Hence $T_{R}^\varphi(e_{i,j})$ elements are equal to a linear sum of its five neighbor elements.

Let us denote by $E_{(i-1)n+j}=e_{i,j}, \ 1\leq i\leq m, \ 1 \leq j \leq n$, the column vector $mn \times 1$ whose has the $((i-1)n+j)$-th (or $(i,j)$-th in matrix form) entry equals to $1$ and the others are equal to zero. Then we have
	$$T_{R}^{\phi}\cdot\left(\begin{smallmatrix}
		E_{1}\\
		\vdots\\
		E_{n}\\
		\vdots\\
		\vdots\\
		\vdots\\
		E_{mn}\\
	\end{smallmatrix}\right)=T_{R}^{\phi}\cdot\left(\begin{smallmatrix}
		e_{1,1}\\
		\vdots\\
		e_{1,n}\\
		\vdots\\
		e_{i,j}\\
		\vdots\\
		e_{m,1}\\
		\vdots\\
		e_{m,n}\\
	\end{smallmatrix}\right)=\left(\begin{smallmatrix}
		T_R^{\phi}(e_{1,1})\\
		\vdots\\
		T_R^{\phi}(e_{1,n})\\
		\vdots\\
		T_R^{\phi}(e_{i,j})\\
		\vdots\\
		T_R^{\phi}(e_{m,1})\\
		\vdots\\
		T_R^{\phi}(e_{m,n})\\
	\end{smallmatrix}\right)=\left(\begin{smallmatrix}
		d e_{1,2}+e e_{2,2}+f e_{2,1}\\
		\vdots\\
		(h+d) e_{1,n-1}+(e+g) e_{2,n-1}+f e_{2,n}\\
		\vdots\\
		ae_{i-1,j-1}+be_{i-1,j}+ce_{i-1,j+1}+de_{i,j+1}\\
+ee_{i+1,j+1}+fe_{i+1,j}+ge_{i+1,j-1}+he_{i,j-1}\\
		\vdots\\
		(b+f)e_{m-1,1}+(c+e+g) e_{m-1,2}+d e_{m,2}\\
		\vdots\\
		(a+c+e+g)e_{m-1.n-1}+(b+f)e_{m-1,n}+\\+(d+h)e_{m,n-1}\\
	\end{smallmatrix}\right)=$$

$$\begin{pmatrix}
		\begin{smallmatrix}
			0 & d & 0 & \dots & 0 & 0\\
			h & 0 & d & \dots & 0 & 0\\
			0 & h & 0 & \dots & 0 & 0\\
			\vdots & \vdots & \vdots & \vdots & \vdots & \vdots\\
			0 & 0 & 0 & \dots & 0 & d\\
			0 & 0 & 0 & \dots & h+d & 0\\
		\end{smallmatrix}
		& \begin{smallmatrix}
			f & e & 0 & \dots & 0 & 0\\
			g & f & e & \dots & 0 & 0\\
			0 & g & f & \dots & 0 & 0\\
			\vdots & \vdots & \vdots & \vdots & \vdots & \vdots\\
			0 & 0 & 0 & \dots & f & e\\
			0 & 0 & 0 & \dots & g+e & f\\
		\end{smallmatrix} &\dots &
		\begin{smallmatrix}
			0 & 0 & 0 & \dots & 0 & 0\\
			0 & 0 & 0 & \dots & 0 & 0\\
			0 & 0 & 0 & \dots & 0 & 0\\
			\vdots & \vdots & \vdots & \vdots & \vdots & \vdots\\
			0 & 0 & 0 & \dots & 0 & 0\\
			0 & 0 & 0 & \dots & 0 & 0\\
		\end{smallmatrix} & \begin{smallmatrix}
			0 & 0 & 0 & \dots & 0 & 0\\
			0 & 0 & 0 & \dots & 0 & 0\\
			0 & 0 & 0 & \dots & 0 & 0\\
			\vdots & \vdots & \vdots & \vdots & \vdots & \vdots\\
			0 & 0 & 0 & \dots & 0 & 0\\
			0 & 0 & 0 & \dots & 0 & 0\\
		\end{smallmatrix} \\
		
		&  &  &  &  & \\
		
		\begin{smallmatrix}
			b & c & 0 & \dots & 0 & 0\\
			a & b & c & \dots & 0 & 0\\
			0 & a & b & \dots & 0 & 0\\
			\vdots & \vdots & \vdots & \vdots & \vdots & \vdots\\
			0 & 0 & 0 & \dots & b & c\\
			0 & 0 & 0 & \dots & a+c & b\\
		\end{smallmatrix} &
		\begin{smallmatrix}
			0 & d & 0 & \dots & 0 & 0\\
			h & 0 & d & \dots & 0 & 0\\
			0 & h & 0 & \dots & 0 & 0\\
			\vdots & \vdots & \vdots & \vdots & \vdots & \vdots\\
			0 & 0 & 0 & \dots & 0 & d\\
			0 & 0 & 0 & \dots & h+d & 0\\
		\end{smallmatrix}  &\dots &
		\begin{smallmatrix}
			0 & 0 & 0 & \dots & 0 & 0\\
			0 & 0 & 0 & \dots & 0 & 0\\
			0 & 0 & 0 & \dots & 0 & 0\\
			\vdots & \vdots & \vdots & \vdots & \vdots & \vdots\\
			0 & 0 & 0 & \dots & 0 & 0\\
			0 & 0 & 0 & \dots & 0 & 0\\
		\end{smallmatrix} &
		\begin{smallmatrix}
			0 & 0 & 0 & \dots & 0 & 0\\
			0 & 0 & 0 & \dots & 0 & 0\\
			0 & 0 & 0 & \dots & 0 & 0\\
			\vdots & \vdots & \vdots & \vdots & \vdots & \vdots\\
			0 & 0 & 0 & \dots & 0 & 0\\
			0 & 0 & 0 & \dots & 0 & 0\\
		\end{smallmatrix}\\
		
		&  &  &  &  & \\		
		
		\vdots &\vdots & \ddots & \vdots & \vdots \\
		
		&  &  &  &  & \\

		\begin{smallmatrix}
			0 & 0 & 0 & \dots & 0 & 0\\
			0 & 0 & 0 & \dots & 0 & 0\\
			0 & 0 & 0 & \dots & 0 & 0\\
			\vdots & \vdots & \vdots & \vdots & \vdots & \vdots\\
			0 & 0 & 0 & \dots & 0 & 0\\
			0 & 0 & 0 & \dots & 0 & 0\\
		\end{smallmatrix} &
		\begin{smallmatrix}
			0 & 0 & 0 & \dots & 0 & 0\\
			0 & 0 & 0 & \dots & 0 & 0\\
			0 & 0 & 0 & \dots & 0 & 0\\
			\vdots & \vdots & \vdots & \vdots & \vdots & \vdots\\
			0 & 0 & 0 & \dots & 0 & 0\\
			0 & 0 & 0 & \dots & 0 & 0\\
		\end{smallmatrix}
		& \dots
		&
		\begin{smallmatrix}
			0 & d & 0 & \dots & 0 & 0\\
			h & 0 & d & \dots & 0 & 0\\
			0 & h & 0 & \dots & 0 & 0\\
			\vdots & \vdots & \vdots & \vdots & \vdots & \vdots\\
			0 & 0 & 0 & \dots & 0 & d\\
			0 & 0 & 0 & \dots & h+d & 0\\
		\end{smallmatrix}
		& \begin{smallmatrix}
			f & e & 0 & \dots & 0 & 0\\
			g & f & e & \dots & 0 & 0\\
			0 & g & f & \dots & 0 & 0\\
			\vdots & \vdots & \vdots & \vdots & \vdots & \vdots\\
			0 & 0 & 0 & \dots & f & e\\
			0 & 0 & 0 & \dots & g+e & f\\
		\end{smallmatrix}\\
		
		&  &  &  &  & \\
		
		\begin{smallmatrix}
			0 & 0 & 0 & \dots & 0 & 0\\
			0 & 0 & 0 & \dots & 0 & 0\\
			0 & 0 & 0 & \dots & 0 & 0\\
			\vdots & \vdots & \vdots & \vdots & \vdots & \vdots\\
			0 & 0 & 0 & \dots & 0 & 0\\
			0 & 0 & 0 & \dots & 0 & 0\\
		\end{smallmatrix} &
		\begin{smallmatrix}
			0 & 0 & 0 & \dots & 0 & 0\\
			0 & 0 & 0 & \dots & 0 & 0\\
			0 & 0 & 0 & \dots & 0 & 0\\
			\vdots & \vdots & \vdots & \vdots & \vdots & \vdots\\
			0 & 0 & 0 & \dots & 0 & 0\\
			0 & 0 & 0 & \dots & 0 & 0\\
		\end{smallmatrix}
		& \dots
		&
		\begin{smallmatrix}
			b+f & c+e+g & 0 & \dots & 0 & 0\\
			a+g & b+f & c+e & \dots & 0 & 0\\
			0 & a+g & b+f & \dots & 0 & 0\\
			\vdots & \vdots & \vdots & \vdots & \vdots & \vdots\\
			0 & 0 & 0 & \dots & b+f & c+e\\
			0 & 0 & 0 & \dots & a+g+c+e & b+f\\
		\end{smallmatrix} &
		\begin{smallmatrix}
			0 & d & 0 & \dots & 0 & 0\\
			h & 0 & d & \dots & 0 & 0\\
			0 & h & 0 & \dots & 0 & 0\\
			\vdots & \vdots & \vdots & \vdots & \vdots & \vdots\\
			0 & 0 & 0 & \dots & 0 & d\\
			0 & 0 & 0 & \dots & h+d & 0\\
		\end{smallmatrix} \\
	\end{pmatrix}\cdot\begin{pmatrix}
		\begin{smallmatrix}
        E_{1}\\
        E_{2}\\
		\vdots\\
        \\
		E_{n}\\
        \end{smallmatrix}\\
        \vspace{0.1cm}\\
        \begin{smallmatrix}
		E_{n+1}\\
        E_{n+2}\\
		\vdots\\
        \\
		E_{2n}\\
        \end{smallmatrix}\\
        \vspace{0.1cm}\\
		\vdots\\
        \\
        \begin{smallmatrix}
		E_{(m-2)n+1}\\
		E_{(m-2)n+2}\\
		\vdots\\
        \\
		E_{(m-1)n}\\
        \end{smallmatrix}\\
        \\
        \begin{smallmatrix}
		E_{(m-1)n+1}\\
		E_{(m-1)n+2}\\
		\vdots\\
        \\
		E_{mn}\\
        \end{smallmatrix}
	\end{pmatrix}$$ $$=\begin{pmatrix}
			A_1 & B_1 & O & O &\dots  &O& O & O\\
			C_1 & A_1 & B_1 &O& \dots &O & O & O\\
			O & C_1 & A_1 &B_1 &\dots &O & O & O\\
			\vdots & \vdots & \vdots & \vdots &\ddots &\vdots & \vdots & \vdots\\
			O & O & O &O&  \dots  &C_1& A_1 & B_1\\
			O & O & O & O & \dots  &O& D_1 & A_1\\
		\end{pmatrix}\cdot \begin{pmatrix}
		E_{1}\\
		\vdots\\
		\vdots\\
		\vdots\\
		E_{mn}\\
	\end{pmatrix}.$$
	
	Hence, the transition of the representation of matrix related to the equations above presented in \eqref{T0} is obtained. So, the proof is complete.
\end{proof}

Now we formulate $T_R$ structure for the functions $\psi$, $\tau$, $\sigma$, $\lambda$, and $\xi$ as follows:

\begin{itemize}
\item the case of $\psi$:\\[1mm]

$T_{R}^{\psi}(e_{1,1})=d e_{1,2}+e e_{2,2}+f e_{2,1},\\
T_{R}^{\psi}(e_{1,n})=d e_{1,1}+e e_{2,1}+f e_{2,n}+g e_{2,n-1}+h e_{1,n-1},\\
T_{R}^{\psi}(e_{m,1})=b e_{m-1,1}+c e_{m-1,2}+d e_{m,2}+e e_{1,2}+f e_{1,1}+g e_{1,n},\\
T_{R}^{\psi}(e_{m,n})=a e_{m-1,n-1}+b e_{m-1,n}+c e_{m-1,1}+d e_{m,1}+e e_{1,1}+f e_{1,n}+g e_{1,n-1}+h e_{m,n-1},\\
T_{R}^{\psi}(e_{1,i})=d e_{1,i+1}+e e_{2,i+1}+f e_{2,i}+g e_{2,i-1}+h e_{1,i-1},\\
T_{R}^{\psi}(e_{m,i})=a e_{m-1,i-1}+b e_{m-1,i}+c e_{m-1,i+1}+d e_{m,i+1}+e e_{1,i+1}+f e_{1,i}+g e_{1,i-1}+h e_{m,i-1},\\
T_{R}^{\psi}(e_{j,1})=b e_{j-1,1}+c e_{j-1,2}+d e_{j,2}+e e_{j+1,2}+f e_{j+1,1},\\
T_{R}^{\psi}(e_{j,n})=a e_{j-1,n-1}+b e_{j-1,n}+c e_{j-1,1}+d e_{j,1}+e e_{j+1,1}+f e_{j+1,n}+ g e_{j+1,n-1}+h e_{j,n-1};
$

\item  the case of $\tau$:\\[1mm]
$\begin{array}{llllll}
T_{R}^{\tau}(e_{1,1})&=&d e_{1,2}+e e_{2,2}+f e_{2,1},\\
T_{R}^{\tau}(e_{1,n})&=&d e_{1,n}+(e+f) e_{2,n}+g e_{2,n-1}+h e_{1,n-1},\\
T_{R}^{\tau}(e_{m,1})&=&b e_{m-1,1}+c e_{m-1,2}+(d+e) e_{m,2}+(f+g) e_{m,1},\\
T_{R}^{\tau}(e_{m,n})&=&a e_{m-1,n-1}+(b+c) e_{m-1,n}+(d+e+f) e_{m,n}+(g+h) e_{m,n-1},\\
T_{R}^{\tau}(e_{1,i})&=&d e_{1,i+1}+e e_{2,i+1}+f e_{2,i}+g e_{2,i-1}+h e_{1,i-1},\\
T_{R}^{\tau}(e_{m,i})&=&a e_{m-1,i-1}+b e_{m-1,i}+c e_{m-1,i+1}+(d+e) e_{m,i+1}+f e_{m,i}+(g+h) e_{m,i-1},\\
T_{R}^{\tau}(e_{j,1})&=&b e_{j-1,1}+c e_{j-1,2}+d e_{j,2}+e e_{j+1,2}+f e_{j+1,1},\\
T_{R}^{\tau}(e_{j,n})&=&a e_{j-1,n-1}+(b+c) e_{j-1,n}+d e_{j,n}+(e+f) e_{j+1,n}+g e_{j+1,n-1}+h e_{j,n-1};
\end{array}$\\[1mm]

\item the case of $\sigma$:\\[1mm]
$\begin{array}{llllll}
T_{R}^{\sigma}(e_{1,1})&=&(a+c+e+g) e_{2,2}+(b+f) e_{2,1}+(h+d) e_{1,2},\\
T_{R}^{\sigma}(e_{1,n})&=&(a+c+g) e_{2,n-1}+(b+e+f) e_{2,n}+d e_{1,n}+h e_{1,n-1},\\
T_{R}^{\sigma}(e_{m,1})&=&(a+c) e_{m-1,2}+b e_{m-1,1}+(d+e+h) e_{m,2}+(f+g) e_{m,1},\\
T_{R}^{\sigma}(e_{m,n})&=&a e_{m-1,n-1}+(b+c) e_{m-1,n}+(d+e+f) e_{m,n}+(g+h) e_{m,n-1},\\
T_{R}^{\sigma}(e_{1,i})&=&(a+g) e_{2,i-1}+(b+f) e_{2,i}+(c+e) e_{2,i+1}+d e_{1,i+1}+h e_{1,i-1},\\
T_{R}^{\sigma}(e_{m,i})&=&a e_{m-1,i-1}+b e_{m-1,i}+c e_{m-1,i+1}+(d+e) e_{m,i+1}+f e_{m,i}+(g+h) e_{m,i-1},\\
T_{R}^{\sigma}(e_{j,1})&=&(a+c) e_{j-1,2}+b e_{j-1,1}+(d+h) e_{j,2}+(e+g) e_{j+1,2}+f e_{j+1,1},\\
T_{R}^{\sigma}(e_{j,n})&=&a e_{j-1,n-1}+(b+c) e_{j-1,n}+d e_{j,n}+(e+f) e_{j+1,n}+g e_{j+1,n-1}+h e_{j,n-1};
\end{array}$

\item the case of $\lambda$:\\[1mm]
$\begin{array}{llllll}
T_{R}^{\lambda}(e_{1,1})&=&(a+c+e+g) e_{2,2}+(b+f) e_{2,1}+(h+d) e_{1,2},\\
T_{R}^{\lambda}(e_{1,n})&=&(a+c+g) e_{2,n-1}+(b+f) e_{2,n}+d e_{1,1}+e e_{1,2}+h e_{1,n-1},\\
T_{R}^{\lambda}(e_{m,1})&=&(a+c)e_{m-1,2}+b e_{m-1,1}+(d+h) e_{m,2}+ e e_{1,2}+ f e_{1,1}+ g e_{1,n},\\
T_{R}^{\lambda}(e_{m,n})&=&a e_{m-1,n-1}+b e_{m-1,n}+c e_{m-1,1}+d e_{m,1}+e e_{1,1}+f e_{1,n}+g e_{1,n-1}+h e_{m,n-1},\\
T_{R}^{\lambda}(e_{1,i})&=&(a+g) e_{2,i-1}+(b+f) e_{2,i}+(c+e) e_{2,i+1}+d e_{1,i+1}+h e_{1,i-1},\\
T_{R}^{\lambda}(e_{m,i})&=&a e_{m-1,i-1}+b e_{m-1,i}+c e_{m-1,i+1}+d e_{m,i+1}+e e_{1,i+1}+f e_{1,i}+g e_{1,i-1}+h e_{m,i-1},\\
T_{R}^{\lambda}(e_{j,1})&=&(a+c) e_{j-1,2}+b e_{j-1,1}+(d+h) e_{j,2}+(e+g) e_{j+1,2}+f e_{j+1,1},\\
T_{R}^{\lambda}(e_{j,n})&=&a e_{j-1,n-1}+b e_{j-1,n}+c e_{j-1,1}+d e_{j,1}+e e_{j+1,1}+f e_{j+1,n}+ g e_{j+1,n-1}+h e_{j,n-1};
\end{array}$\\[1mm]

\item the case of $\xi$:\\[1mm]
$\begin{array}{lllllll}
T_{R}^{\xi}(e_{1,1})&=&a e_{m,n}+b e_{m,1}+c e_{m,2}+d e_{1,2}+ e e_{2,2}+f e_{2,1}+g e_{2,n}+h e_{1,n},\\
T_{R}^{\xi}(e_{1,n})&=&a e_{m,n-1}+b e_{m,n}+c e_{m,1}+d e_{1,n}+(e+f)e_{2,n}+g e_{2,n-1}+h e_{1,n-1},\\
T_{R}^{\xi}(e_{m,1})&=&a e_{m-1,n}+b e_{m-1,1}+ c e_{m-1,2}+(d+e)e_{m,2}+(f+g)e_{m,1}+h e_{m,n},\\
T_{R}^{\xi}(e_{m,n})&=&a e_{m-1,n-1}+(b+c) e_{m-1,n}+(d+e+f) e_{m,n}+(g+h) e_{m,n-1},\\
T_{R}^{\xi}(e_{1,i})&=&a e_{m,i-1}+b e_{m,i}+c e_{m,i+1}+d e_{1,i+1}+e e_{2,i+1}+f e_{2,i}+g e_{2,i-1}+h e_{1,i-1},\\
T_{R}^{\xi}(e_{m,i})&=&a e_{m-1,i-1}+b e_{m-1,i}+c e_{m-1,i+1}+(d+e) e_{m,i+1}+f e_{m,i}+(g+h) e_{m,i-1},\\
T_{R}^{\xi}(e_{j,1})&=&a e_{j-1,n}+b e_{j-1,1}+c e_{j-1,2}+d e_{j,2}+e e_{j+1,2}+f e_{j+1,1}+g e_{j+1,n}+h e_{j,n},\\
T_{R}^{\xi}(e_{j,n})&=&a e_{j-1,n-1}+(b+c) e_{j-1,n}+d e_{j,n}+(e+f) e_{j+1,n}+g e_{j+1,n-1}+h e_{j,n-1},
\end{array}$
\end{itemize}

where $2\leq i\leq n-1, \ 2\leq j\leq m-1$ and $a,b,c,d,e,f,g,h\in\mathbb{Z}_{p}$.

Applying the above forms of matrix rule we give the following theorem without proof.
\begin{thm}\label{thm20} Let the boundary conditions generated by $\psi$, $\tau$, $\sigma$, $\lambda$ and $\xi$. Then $T_{\mathrm{R}}^{\psi}$, $T_{\mathrm{R}}^{\tau}$, $T_{\mathrm{R}}^{\sigma}$, $T_{\mathrm{R}}^{\lambda}$ and $T_{\mathrm{R}}^{\xi}$ have the following matrix forms:
	\\[2mm]

$$\begin{array}{cc}
  T_{\mathrm{R}}^{\psi}=\begin{pmatrix}
			A_{np} & B_{np} & O & O &\dots  &O& O & O\\
			C_{np} & A_{np} & B_{np} &O& \dots &O & O & O\\
			O & C_{np} & A_{np} & B_{np} &\dots &O & O & O\\
			\vdots & \vdots & \vdots & \vdots &\ddots &\vdots & \vdots & \vdots\\
			O & O & O &O&  \dots  &C_{np} & A_{np} & B_{np}\\
			D_{np} & O & O & O & \dots  &O& C_{np} & A_{np}\\
		\end{pmatrix}, & T_{\mathrm{R}}^{\tau}=\begin{pmatrix}
			A_{na} & B_{na} & O & O &\dots  &O& O & O\\
			C_{na} & A_{na} & B_{na} &O& \dots &O & O & O\\
			O & C_{na} & A_{na} & B_{na} &\dots &O & O & O\\
			\vdots & \vdots & \vdots & \vdots &\ddots &\vdots & \vdots & \vdots\\
			O & O & O &O&  \dots  &C_{na} & A_{na} & B_{na}\\
			O & O & O & O & \dots  &O& C_{na} & D_{na}\\
		\end{pmatrix}, \end{array}$$

$$\begin{array}{cc}  T_{\mathrm{R}}^{\sigma}=\begin{pmatrix}
			A_{ra} & E_{ra} & O & O &\dots  &O& O & O\\
			C_{ra} & A_{ra} & B_{ra} &O& \dots &O & O & O\\
			O & C_{ra} & A_{ra} & B_{ra} &\dots &O & O & O\\
			\vdots & \vdots & \vdots & \vdots &\ddots &\vdots & \vdots & \vdots\\
			O & O & O &O&  \dots  &C_{ra} & A_{ra} & B_{ra}\\
			D_{ra} & O & O & O & \dots  &O& C_{ra} & A_{ra}\\
		\end{pmatrix}, & T_{\mathrm{R}}^{\lambda}=\begin{pmatrix}
			F_{rp} & E_{rp} & O & O &\dots  &O& O & O\\
			C_{rp} & A_{rp} & B_{rp} &O& \dots &O & O & O\\
			O & C_{rp} & A_{rp} & B_{rp} &\dots &O & O & O\\
			\vdots & \vdots & \vdots & \vdots &\ddots &\vdots & \vdots & \vdots\\
			O & O & O &O&  \dots  &C_{rp} & A_{rp} & B_{rp}\\
			D_{rp} & O & O & O & \dots  &O& C_{rp} & A_{rp}\\
		\end{pmatrix}, \end{array}$$

  $$\begin{array}{cc}T_{\mathrm{R}}^{\xi}=\begin{pmatrix}
			A_{pa} & B_{pa} & O & O &\dots  &O& O & E_{pa}\\
			C_{pa} & A_{pa} & B_{pa} &O& \dots &O & O & O\\
			O & C_{pa} & A_{pa} & B_{pa} &\dots &O & O & O\\
			\vdots & \vdots & \vdots & \vdots &\ddots &\vdots & \vdots & \vdots\\
			O & O & O &O&  \dots  &C_{pa} & A_{pa} & B_{pa}\\
			O & O & O & O & \dots  &O& C_{pa} & D_{pa}\\
		\end{pmatrix}, &
\end{array}$$
	where\\[1mm]
$\begin{array}{lllll}
  A_{np}=A+d\epsilon_{n,1}, & B_{np}=B+e \epsilon_{n,1},\\
  C_{np}=C+c\epsilon_{n,1},  &    D_{np}=B+e\epsilon_{n,1}+g\epsilon_{1,n},\\
  A_{na}=A+d\epsilon_{n,n}, & B_{na}=B+e \epsilon_{n,n},\\
  C_{na}=C+c\epsilon_{n,n},  &    D_{na}=A+B+g\epsilon_{1,1}+(d+e)\epsilon_{n,n},\\
  A_{ra}=A+h\epsilon_{1,2}+d\epsilon_{n,n}, & B_{ra}=B+g \epsilon_{1,2}+e \epsilon_{n,n},\\
  C_{ra}=C+a\epsilon_{1,2}+c\epsilon_{n,n}, & D_{ra}=A+B+g \epsilon_{1,1}+h \epsilon_{1,2}+(d+e) \epsilon_{n,n},\\
  A_{rp}=A+h\epsilon_{1,2}+d\epsilon_{n,1}, & B_{rp}=B+g \epsilon_{1,2}+e\epsilon_{n,1},\\
  C_{rp}=C+a\epsilon_{1,2}+c\epsilon_{n,1},  &    D_{rp}=B+e\epsilon_{n,1}+g\epsilon_{1,n},\\
  F_{rp}=A+h\epsilon_{1,2}+d\epsilon_{n,1}+e\epsilon_{n,2}, & E_{rp}=B+C+(a+g)\epsilon_{1,2}+c\epsilon_{n,n-1}+e\epsilon_{n,n},\\
   A_{pa}=A+h\epsilon_{1,n}+d\epsilon_{n,n}, & B_{pa}=B+g \epsilon_{1,n}+e \epsilon_{n,n},\\
  C_{pa}=C+a\epsilon_{1,n}+c\epsilon_{n,n}, & D_{pa}=A+B+g \epsilon_{1,1}+h \epsilon_{1,n}+(d+e) \epsilon_{n,n},\\
  E_{pa}=C+a\epsilon_{1,n}+c\epsilon_{n,1}, &   E_{ra}=B+C+(a+g)\epsilon_{1,2}+c\epsilon_{n,n-1}+e\epsilon_{n,n}. \\
\end{array}$
\\ where
 $O$, $\epsilon_{1,1}$, $\epsilon_{1,2}$, $\epsilon_{1,n}$, $\epsilon_{n,1}$, $\epsilon_{n,n-1}$, $\epsilon_{n,n}\in \mathbf{M}_{n\times n}(\mathbb{Z}_{p})$, \, $O$ is the zero matrix and \, $\epsilon_{1,1}$, $\epsilon_{1,2}$, $\epsilon_{1,n}$, $\epsilon_{n,1}$, $\epsilon_{n,n-1}$, $\epsilon_{n,n}$ are unit matrices.
\end{thm}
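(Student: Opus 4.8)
The plan is to run the argument of Theorem~\ref{thm0} five more times, once for each of $\psi,\tau,\sigma,\lambda,\xi$. For a fixed $\varphi$ I would first view $T_{\mathrm R}^{\varphi}$ as the linear endomorphism of $\mathbf M_{m\times n}(\mathbb Z_p)$ induced by the update rule, compute $T_{\mathrm R}^{\varphi}(e_{i,j})$ for every one of the $mn$ matrix units, and then read off the $n\times n$ block form of the $mn\times mn$ matrix through the identification $E_{(i-1)n+j}=e_{i,j}$ of \eqref{eq2}. Under that identification the block row with index $i$ carries the rules of the cells $(i,1),\dots,(i,n)$, its $j$-th local row is the rule of the cell $(i,j)$, and the block column with index $i'$ records the dependence of those rules on the cells $(i',\cdot)$. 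The rules $T_{\mathrm R}^{\varphi}(e_{i,j})$ that I need are exactly the interior rule \eqref{eij} together with the eight families displayed immediately before the statement.

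First I would dispose of the interior cells $2\le i\le m-1$, $2\le j\le n-1$: there the rule is \eqref{eij}, and the computation already performed in Theorem~\ref{thm0} gives block $(i,i-1)=C$, block $(i,i)=A$, block $(i,i+1)=B$ with $A=dP+hQ$, $B=fI+eP+gQ$, $C=bI+cP+aQ$, all remaining interior blocks being $O$. Hence, for every $\varphi$, the matrix $T_{\mathrm R}^{\varphi}$ is block-tridiagonal up to corrections confined to (a) the first and last block rows, (b) the first and last local rows of each block row, and (c) the four corner blocks $(1,1),(1,m),(m,1),(m,m)$. The whole remaining task is to extract those corrections from the eight boundary families.

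Next I would go through the boundary cells edge by edge. Each left- or right-edge rule $T_{\mathrm R}^{\varphi}(e_{i,1})$, $T_{\mathrm R}^{\varphi}(e_{i,n})$ with $2\le i\le m-1$ splits into the restriction of $C,A,B$ to its local row ($1$ or $n$) plus a few extra monomials forced by $\varphi$; those extras are precisely the matrix units $\epsilon_{1,\bullet}$ or $\epsilon_{n,\bullet}$ occurring in the stated $A_{\bullet},B_{\bullet},C_{\bullet}$ — for instance, in the case $\psi$ the periodic right edge contributes $c\,e_{i-1,1}+d\,e_{i,1}+e\,e_{i+1,1}$ to the rule of $(i,n)$, which is exactly $c\,\epsilon_{n,1}$ inside $C_{np}$, $d\,\epsilon_{n,1}$ inside $A_{np}$ and $e\,\epsilon_{n,1}$ inside $B_{np}$. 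Likewise, the top- and bottom-edge rules $T_{\mathrm R}^{\varphi}(e_{1,j})$, $T_{\mathrm R}^{\varphi}(e_{m,j})$ with $2\le j\le n-1$ fix the first and last block rows: besides reproducing $A,B$ (resp.\ $C,A$) they may throw weight into a far block column, and this is how a periodic vertical wrap produces the off-tridiagonal corner block $D_{\bullet}$ (for $\psi$ the terms $f\,e_{1,1}+e\,e_{1,2}+g\,e_{1,n}$ in $T_{\mathrm R}^{\psi}(e_{m,1})$ assemble the first row of $D_{np}=B+e\epsilon_{n,1}+g\epsilon_{1,n}$), while an adiabatic or reflexive wrap instead merges coefficients into an already present block, which accounts for $D_{na},D_{ra},D_{pa}$ and for the modified first-row blocks $F_{rp},E_{rp},E_{ra}$. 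Finally the four corner rules pin down the four corner blocks, and here I must invoke the ``strong left / strong right'' conventions attached to \eqref{nrfunc0}--\eqref{pafunc0} to fix the status of the out-of-grid cells $e_{0,0},e_{0,n+1},e_{m+1,0},e_{m+1,n+1}$. Collecting all contributions reproduces the five displayed matrices.

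The conceptual content is light; the real obstacle is the bookkeeping. One must carry out this read-off consistently for the $8$ boundary families across the $5$ maps, while watching (i) which local row/column and which block a given stray monomial lands in, (ii) the coefficient merges at the cells adjacent to a corner, where two out-of-grid neighbours collapse onto one in-grid cell and force sums such as $b+f$, $a+c+e+g$, $d+e+f$, $g+h$, and (iii) the asymmetric ``strong'' conventions, which make the four corners behave differently from one another even though each edge is homogeneous in its interior. Organising the computation by the pair (edge, map), as above, keeps it manageable, and each entry of each $A_{\bullet},B_{\bullet},C_{\bullet},D_{\bullet},E_{\bullet},F_{\bullet}$ then matches a specific boundary rule verbatim.
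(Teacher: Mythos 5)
Your proposal is correct and coincides with what the paper intends: the paper states this theorem ``without proof,'' deriving it by exactly the same basis-unit read-off as in Theorem~\ref{thm0}, applied to the eight displayed boundary families for each of $\psi,\tau,\sigma,\lambda,\xi$. Your spot-checks (e.g.\ the $c\epsilon_{n,1},d\epsilon_{n,1},e\epsilon_{n,1}$ corrections and the corner block $D_{np}=B+e\epsilon_{n,1}+g\epsilon_{1,n}$ for $\psi$) are consistent with the stated block forms, so the plan is the intended argument and only the remaining bookkeeping is left to carry out.
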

\subsection{The rule matrices for boundary conditions by rotating $\phi$ on the lattice}

In this subsection, we consider the rule matrices the boundary conditions generated by rotating $\varphi$ (more precisely, for the case $\phi$) on the lattice. We distinguish the following $4$ cases:   $\phi\equiv\phi_{0^\circ}$, $\phi_{90^\circ}$, $\phi_{180^\circ}$, $\phi_{270^\circ}$.

\begin{figure}[h]
  \centering
  \includegraphics[width=0.6\textwidth]{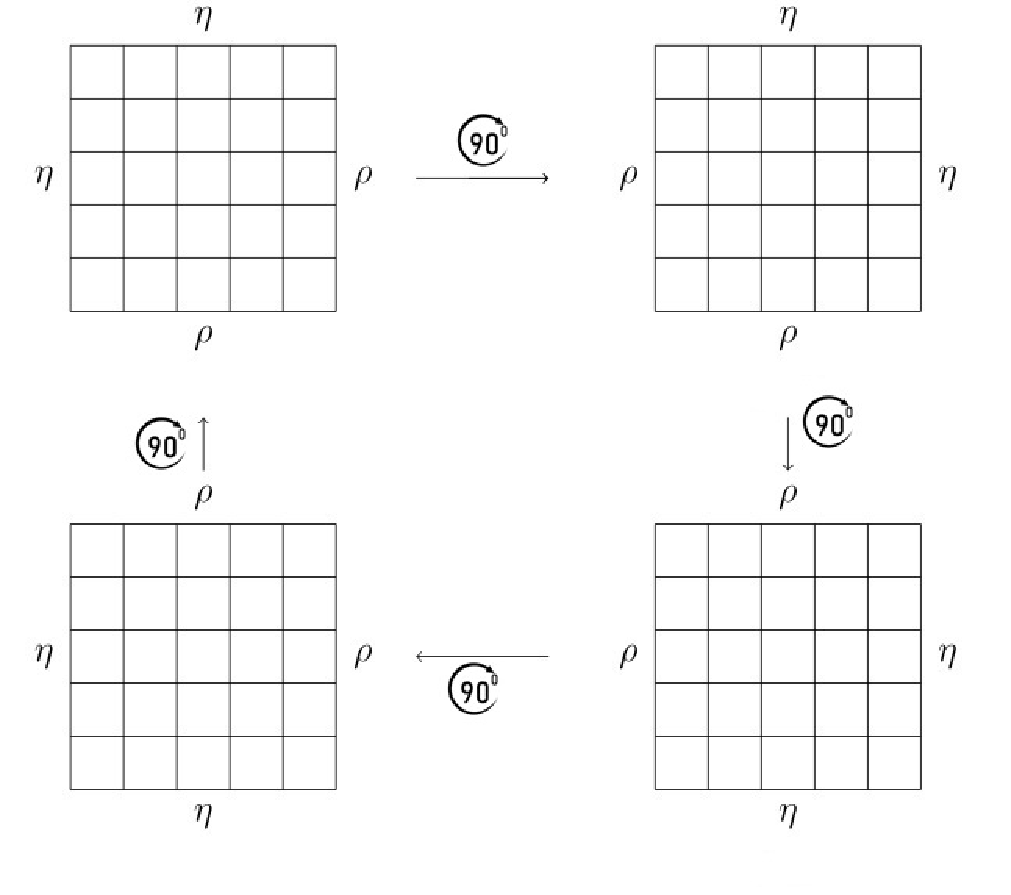}\\
  \caption{The mixed boundary conditions by rotating $90^{\circ}$ the functions $\phi$.}\label{Rotations}
\end{figure}

After rotate to $90^\circ$ degrees of the function $\phi$ on the lattice(see Figure \ref{Rotations}) then we define
$$
\phi_{90^\circ}(\alpha)=\phi_{90^\circ}(\eta)=\eta, \ \ \phi_{90^\circ}(\rho)=\phi_{90^\circ}(\pi)=\rho.
$$

Above we have defined \textit{strong left null} and \textit{strong right reflexive} for the cells $x_{0,n+1}$, $x_{m+1,0}$. If we consider ${\phi_{90^\circ}}$ then ambiguity with defining boundary conditions in the cells  $x_{0,n+1}$, $x_{m+1,0}$ moves to $x_{0,0}$, $x_{m+1,n+1}$. That is why, we define the conditions   \textit{strong up null} and \textit{strong down reflexive} in the cells  $x_{m+1,n+1}$ and $x_{0,0}$, respectively. Therefore, reflexive condition acts in the cell $x_{0,0}$ and  there is null condition in $x_{m+1,n+1}$. Then we conclude

$$\begin{array}{llll}
T_{R}^{\phi_{90^\circ}}(e_{1,1})&=&(a+g+e) e_{2,2}+(d+h) e_{1,2}+f e_{2,1},\\[1mm]
T_{R}^{\phi_{90^\circ}}(e_{1,n})&=&f e_{2,n}+g e_{2,n-1}+h e_{1,n-1},\\[1mm]
T_{R}^{\phi_{90^\circ}}(e_{m,1})&=&(a+c+e+g)e_{m-1,2}+(b+f) e_{m-1,1}+(d+h) e_{m,2},\\[1mm]
T_{R}^{\phi_{90^\circ}}(e_{m,n})&=&(a+g)e_{m-1,n-1}+(b+f)e_{m-1,n}+h e_{m,n-1},\\[1mm]
T_{R}^{\phi_{90^\circ}}(e_{1,i})&=&d e_{1,i+1}+e e_{2,i+1}+f e_{2,i}+g e_{2,i-1}+h e_{1,i-1},\\[1mm]
T_{R}^{\phi_{90^\circ}}(e_{m,i})&=&(a+g)e_{m-1,i-1}+(b+f)e_{m-1,i}+(c+e)e_{m-1,i+1}+d e_{m,i+1}\\
&&+h e_{m,i-1},\\[1mm]
\end{array}$$

$$\begin{array}{llll}
T_{R}^{\phi_{90^\circ}}(e_{j,1})&=&(a+c) e_{j-1,2}+b e_{j-1,1}+(d+h) e_{j,2}+(e+g) e_{j+1,2}+f e_{j+1,1},\\[1mm]
T_{R}^{\phi_{90^\circ}}(e_{j,n})&=&a e_{j-1,n-1}+b e_{j-1,n}+f e_{j+1,n}+g e_{j+1,n-1}+h e_{j,n-1},\\[1mm]
\end{array}$$
where $2\leq i\leq n-1, \ 2\leq j\leq m-1$ and $a,b,c,d,e,f,g,h\in\mathbb{Z}_{p}$.

For non-border elements $T_{R}(e_{i,j})$ is computed as in (\ref{eij}). The following theorem forms the transition rule matrix   $T_{\mathrm{R}}^{\phi_{90^\circ}}$.

\begin{thm}\label{thm90} Let $T_{\mathrm{R}}^{\phi_{90^\circ}} \colon \mathbb{Z}_{p}^{mn} \to \mathbb{Z}_{p}^{mn}$ be  the rule matrix which takes the finite Moore CA over the configuration $C(t)$ of order $m\times n$ to the configuration $C(t+1)$ under the boundary condition of $\phi_{90^\circ}$. Then $T_{\mathrm{R}}^{\phi_{90^\circ}}$ has the following matrix form:
	
	\begin{equation}\label{T90}
		T_{\mathrm{R}}^{\phi_{90^\circ}}=\begin{pmatrix}
			A_2 & F_2 & O & O &\dots  &O& O & O\\
			C_2 & A_2 & B_2 &O& \dots &O & O & O\\
			O & C_2 & A_2 & B_2 &\dots &O & O & O\\
			\vdots & \vdots & \vdots & \vdots &\ddots &\vdots & \vdots & \vdots\\
			O & O & O &O&  \dots  &C_2 & A_2 & B_2\\
			O & O & O & O & \dots  &O& D_2 & A_2\\
		\end{pmatrix},
	\end{equation}
	where $A_2=A+h\epsilon_{1,2}, \ \ B_2=B+g \epsilon_{1,2}, \ \    C_2=C+a\epsilon_{1,2}, \ \ D_2=B+C+(a+g)\epsilon_{1,2}, \ \ F_2=B+(a+g)\epsilon_{1,2}, $\\[1mm]
$ O, \epsilon_{1,2}\in \mathbf{M}_{n\times n}(\mathbb{Z}_{p}), \ \mbox{with} \  O \ \mbox{is the zero matrix and} \   \epsilon_{1,2} \ \mbox{is a unit matrix}.$
\end{thm}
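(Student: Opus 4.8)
The plan is to follow verbatim the strategy used for Theorem~\ref{thm0}. I would fix the row-major identification $E_{(i-1)n+j}=e_{i,j}$ ($1\le i\le m$, $1\le j\le n$) of the standard basis of $\mathbb{Z}_p^{mn}$ coming from $\Phi$, so that $T_{\mathrm{R}}^{\phi_{90^\circ}}$ is characterised as the unique matrix satisfying $T_{\mathrm{R}}^{\phi_{90^\circ}}\cdot(E_1,\dots,E_{mn})^T=(T_R^{\phi_{90^\circ}}(e_{1,1}),\dots,T_R^{\phi_{90^\circ}}(e_{m,n}))^T$. Since every image $T_R^{\phi_{90^\circ}}(e_{i,j})$ listed just before the theorem is a combination of the $e_{i',j'}$ with $i'\in\{i-1,i,i+1\}\cap\{1,\dots,m\}$, the matrix is automatically block tridiagonal with blocks of size $n\times n$, one block per lattice row; what remains is only to name the three block diagonals.

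For an interior block row $2\le i\le m-1$ and an interior position $2\le j\le n-1$ the image is the generic one~(\ref{eij}), which contributes $A=dP+hQ$ on the diagonal block, $B=fI+eP+gQ$ on the super-block and $C=bI+cP+aQ$ on the sub-block. I would then track the deviations, which occur only at the two ends of the row. At $j=n$ the virtual column $n+1$ carries the (strong) null condition, so its three neighbours drop out and the last rows of $A$, $B$, $C$ remain untouched. At $j=1$ the formula for $T_R^{\phi_{90^\circ}}(e_{j,1})$ shows that the coefficients of $e_{j,2}$, $e_{j-1,2}$, $e_{j+1,2}$ are $d+h$, $a+c$, $e+g$; subtracting the generic entries $d$, $c$, $e$ already sitting at position $(1,2)$ of the diagonal, sub- and super-blocks leaves exactly the corrections, so the diagonal block is $A+h\epsilon_{1,2}=A_2$, the sub-block is $C+a\epsilon_{1,2}=C_2$, and the super-block is $B+g\epsilon_{1,2}=B_2$. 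This settles all block rows $2,\dots,m-1$.

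Next I would handle the first and last block rows, where in addition the row-$0$ (resp. row-$(m+1)$) virtual cells must be resolved, and in particular the diagonal corners $x_{0,0}$ and $x_{m+1,n+1}$, which are governed by the \emph{strong down reflexive} and \emph{strong up null} conventions introduced just above the theorem. For $i=1$ the whole top row is null, so no sub-block appears; the formulas for $T_R^{\phi_{90^\circ}}(e_{1,1})$, $T_R^{\phi_{90^\circ}}(e_{1,i})$ and $T_R^{\phi_{90^\circ}}(e_{1,n})$ give the same diagonal block $A+h\epsilon_{1,2}=A_2$, while on the super-block the doubly reflected corner value at $x_{0,0}$ puts an extra $a+g$ into the $(1,2)$-entry, producing $B+(a+g)\epsilon_{1,2}=F_2$. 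For $i=m$ the bottom row is reflexive: the row-$(m-1)$ part of each image gives $B+C$ on the sub-block, the row-$m$ part gives $A_2$ on the diagonal, and the null corner $x_{m+1,n+1}$ together with the first-column merging adds $(a+g)\epsilon_{1,2}$ to the sub-block, giving $B+C+(a+g)\epsilon_{1,2}=D_2$. Assembling these blocks reproduces exactly the matrix~(\ref{T90}).

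The main obstacle I anticipate is the corner bookkeeping for $e_{1,1}$, $e_{1,n}$, $e_{m,1}$, $e_{m,n}$: one must keep track of which of the three out-of-grid neighbours vanish, of how the ambiguous diagonal corners $x_{0,0}$ and $x_{m+1,n+1}$ get reassigned under the strong conventions, and of how the surviving coefficients merge with the first-column corrections, and then check that the outcome is precisely the $\epsilon_{1,2}$-perturbations recorded in $A_2,B_2,C_2,D_2,F_2$. Everything else is the same routine block read-off already carried out in Theorem~\ref{thm0}, so no new idea is needed.
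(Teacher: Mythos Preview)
Your plan is correct and is exactly the paper's approach: the paper's proof consists of the single sentence ``The proof is similar to the proof of Theorem~\ref{thm0}.'' Your block read-off is accurate; the only imprecision is in the verbal attribution of the corner corrections (in $F_2$ the extra $+a$ at position $(1,2)$ comes from the doubly-reflected corner $x_{0,0}$ while the $+g$ is the ordinary left-reflexive contribution from $x_{2,0}$, and the $(a+g)\epsilon_{1,2}$ in $D_2$ arises at the bottom-left corner $x_{m+1,0}$ rather than from the null corner $x_{m+1,n+1}$), but this does not affect the outcome since you verify the blocks against the listed images anyway.
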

\begin{proof}
The proof is similar to the proof of Theorem \ref{thm0}.
\end{proof}

Now we shall consider rotation to $180^\circ$ degrees of the function $\phi$ on the lattice as
$$
\phi_{180^\circ}(\eta)=\phi_{180^\circ}(\pi)=\eta, \ \ \phi_{180^\circ}(\alpha)=\phi_{180^\circ}(\rho)=\rho.
$$

If we consider rotating to $180^\circ$ degrees for the matrices $P$, $Q$, $I$, $\epsilon_{1,2}$, $\epsilon_{n,n-1}$ and the parameters $a, b, c, d, e, f, g, h \in \mathbb{Z}_p^*$ we have

$$
\begin{array}{llllllllllllll}
rot^{180^\circ}(P)&=Q,  \ \ &rot^{180^\circ}(Q)&=P, \ \  &rot^{180^\circ}(I)&=I,\\[1mm]
  rot^{180^\circ}(\epsilon_{1,2})&=\epsilon_{n,n-1},
&rot^{180^\circ}(\epsilon_{n,n-1})&=\epsilon_{1,2},
  &rot^{180^\circ}(d)&=h, \\[1mm] rot^{180^\circ}(h)&=d,
 \ \ \ & rot^{180^\circ}(e)&=a,
  &rot^{180^\circ}(g)&=c, \\[1mm] rot^{180^\circ}(f)&=b,
&rot^{180^\circ}(a)&=e,  &rot^{180^\circ}(c)&=g, \\[1mm] rot^{180^\circ}(b)&=f. & &\\[1mm]
\end{array}
$$

\begin{table}[htbp]
\footnotesize
\centering
\caption{Situation of parameters of the function $\phi$ on the boundary cells.}\label{ptable}
{\large \begin{tabular}{ccc}
  \begin{tabular}{|l|l|l|}
  \hline
  a & b & c \\
  \hline
  h &   & d \\
  \hline
  g & f & e \\
  \hline
  \end{tabular}
  & \rotatebox[origin=c]{0}{$\rightarrow$} & 
  \begin{tabular}{|l|l|l|}
  \hline
  e & f & g \\
  \hline
  d &   & h \\
  \hline
  c & b & a \\
  \hline
  \end{tabular}
\end{tabular}}
\end{table}

By above denotations we can give the following theorem.
\begin{thm}\label{thm180}
Let $T_{\mathrm{R}}^{\phi_{180^\circ}} \colon \mathbb{Z}_{p}^{mn} \to \mathbb{Z}_{p}^{mn}$ be  the rule matrix which takes the finite Moore CA over the configuration $C(t)$ of order $m\times n$ to the configuration $C(t+1)$ under the boundary condition of $\phi_{180^\circ}$. Then $T_{\mathrm{R}}^{\phi_{180^\circ}}$ has the following matrix form:

\begin{equation}\label{T180}
		T_{\mathrm{R}}^{\phi_{180^\circ}}=\begin{pmatrix}
			A_3 & D_3 & O & O &\dots  &O& O & O\\
			C_3 & A_3 & B_3 &O& \dots &O & O & O\\
			O & C_3 & A_3 & B_3 &\dots &O & O & O\\
			\vdots & \vdots & \vdots & \vdots &\ddots &\vdots & \vdots & \vdots\\
			O & O & O &O&  \dots  &C_3 & A_3 & B_3\\
			O & O & O & O & \dots  &O& C_3 & A_3\\
		\end{pmatrix},
	\end{equation}
	where $  A_3=A+h\epsilon_{1,2}, \ \ B_3=B+g \epsilon_{1,2}, \ \
  C_3=C+a\epsilon_{1,2},\ \ D_3=B+C+(a+g)\epsilon_{1,2}+c e_{n,n-1},$\\[1mm]
$O, \epsilon_{1,2}, \epsilon_{n,n-1}\in \mathbf{M}_{n\times n}(\mathbb{Z}_{p}),  O \,\mbox{is the zero matrix and} \ \ \epsilon_{1,2}, \epsilon_{n,n-1}\, \mbox{are  unit matrices}.$
\end{thm}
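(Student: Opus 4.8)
The plan is to exploit the fact that, by construction, $\phi_{180^\circ}$ is the boundary condition obtained from $\phi$ by a $180^\circ$ rotation of the lattice, and to realise this rotation as a conjugation of rule matrices. Concretely, let $J_n\in\mathbf{M}_{n\times n}(\mathbb{Z}_p)$ be the reversal permutation matrix (ones on the antidiagonal) and let $R\in\mathbf{M}_{mn\times mn}(\mathbb{Z}_p)$ be the block matrix which is block-antidiagonal with every nonzero block equal to $J_n$; thus $R$ implements $e_{i,j}\mapsto e_{m+1-i,\,n+1-j}$ and $R^2=I$. The first step I would isolate and prove is the conjugation identity
\[ T_{\mathrm{R}}^{\phi_{180^\circ}}=R\,\bigl(T_{\mathrm{R}}^{\phi}\bigr)^{\kappa}\,R, \]
where $(T_{\mathrm{R}}^{\phi})^{\kappa}$ denotes the matrix of Theorem~\ref{thm0} with the parameters swapped as in Table~\ref{ptable}, i.e. $a\leftrightarrow e$, $b\leftrightarrow f$, $c\leftrightarrow g$, $d\leftrightarrow h$.

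To establish this identity I would check it on every basis matrix $e_{i,j}$. For a non-border cell the Moore stencil \eqref{eij} is centrally symmetric, so rotating the configuration, applying the $\phi$-rule with the swapped weights, and rotating back reproduces exactly \eqref{eij} with the original weights; this is precisely the content of the substitutions $rot^{180^\circ}(a)=e$, $rot^{180^\circ}(b)=f$, etc.\ recorded before the statement. For the border cells one must verify that the $\phi$ boundary layout of Tables~\ref{tab:T1}--\ref{tab:T4}, together with the \emph{strong left null} / \emph{strong right reflexive} corner conventions, goes under the $180^\circ$ rotation to the $\phi_{180^\circ}$ layout together with its \emph{strong up null} / \emph{strong down reflexive} corner conventions. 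This matching of the corner conventions is exactly the reason those conventions were introduced, and it is the step I expect to be the main bookkeeping obstacle; once it is done, the conjugation identity follows because both sides agree on all of $\{e_{i,j}\}$.

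With the conjugation identity in hand the rest is a routine block computation. Conjugation by $R$ reverses the order of the block rows and columns of $T_{\mathrm{R}}^{\phi}$ and replaces each $n\times n$ block $M$ by $J_nMJ_n$. Using the relations $J_nPJ_n=Q$, $J_nQJ_n=P$, $J_nIJ_n=I$, $J_n\epsilon_{1,2}J_n=\epsilon_{n,n-1}$, $J_n\epsilon_{n,n-1}J_n=\epsilon_{1,2}$ (the $rot^{180^\circ}$ relations stated above) together with the swap $\kappa$, one computes
\[ J_nA_1J_n=A_3,\qquad J_nB_1J_n=C_3,\qquad J_nC_1J_n=B_3,\qquad J_nD_1J_n=D_3, \]
where $A_1,B_1,C_1,D_1$ are the blocks of \eqref{T0} (and $D_3=B+C+(a+g)\epsilon_{1,2}+c\,\epsilon_{n,n-1}$ emerges directly from $D_1=B+C+(c+e)\epsilon_{n,n-1}+g\epsilon_{1,2}$ after conjugation and substitution). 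Tracking where the special block $D_1$, originally in position $(m,m-1)$, lands after the reversal — namely into position $(1,2)$ as $D_3$ — while the remaining blocks fill out the tridiagonal pattern with $A_3$ on the diagonal, $B_3$ on the superdiagonal, $C_3$ on the subdiagonal, and the last block row becoming $(O,\dots,O,C_3,A_3)$, yields exactly the matrix \eqref{T180}.

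As an alternative I would mention the direct route used for Theorem~\ref{thm0}: list the images $T_{\mathrm{R}}^{\phi_{180^\circ}}(e_{i,j})$ on the border cells from the rotated boundary tables and assemble the block matrix by hand. This avoids introducing $R$ at the cost of eight separate boundary computations, so I would record it only as a remark and carry out the conjugation argument as the main line, since the $rot^{180^\circ}$ table and Table~\ref{ptable} collected before the statement are precisely what that argument requires.
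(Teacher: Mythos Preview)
Your proposal is correct and follows essentially the same approach as the paper: the paper's proof applies the operation $rot^{180^\circ}$ to the block matrix $T_{\mathrm{R}}^{\phi}$ of Theorem~\ref{thm0}, using precisely the relations $rot^{180^\circ}(P)=Q$, $rot^{180^\circ}(\epsilon_{n,n-1})=\epsilon_{1,2}$, and the parameter swap of Table~\ref{ptable} to compute $rot^{180^\circ}(A_1)=A_3$, $rot^{180^\circ}(B_1)=C_3$, $rot^{180^\circ}(C_1)=B_3$, $rot^{180^\circ}(D_1)=D_3$. Your conjugation by $R$ (block-antidiagonal with blocks $J_n$) together with the swap $\kappa$ is exactly a rigorous formalisation of the paper's $rot^{180^\circ}$ operator, and your explicit justification of the conjugation identity on border cells is a step the paper leaves implicit.
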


\begin{proof} First let us rotate the matrix with blocks then we rotate each block matrices $A_1$, $B_1$, $C_1$, $D_1$.
$$
		rot^{180^\circ}\left(T_{\mathrm{R}}^{\phi}\right)=rot^{180^\circ}\begin{pmatrix}
			A_1 & B_1 & O & O &\dots  &O& O & O\\
			C_1 & A_1 & B_1 &O& \dots &O & O & O\\
			O & C_1 & A_1 & B_1 &\dots &O & O & O\\
			\vdots & \vdots & \vdots & \vdots &\ddots &\vdots & \vdots & \vdots\\
			O & O & O &O&  \dots  &C_1 & A_1 & B_1\\
			O & O & O & O & \dots  &O& D_1 & A_1\\
		\end{pmatrix}$$
$$=\begin{pmatrix}
			rot^{180^\circ}(A_1) & rot^{180^\circ}(D_1)  &\dots  &O& O & O\\
			rot^{180^\circ}(B_1) & rot^{180^\circ}(A_1) & \dots &O & O & O\\
			O & rot^{180^\circ}(B_1) & \dots &O & O & O\\
			\vdots & \vdots & \ddots &\vdots & \vdots & \vdots\\
			O & O &   \dots  &rot^{180^\circ}(B_1) & rot^{180^\circ}(A_1) & rot^{180^\circ}(C_1)\\
			O & O &  \dots  &O& rot^{180^\circ}(B_1) & rot^{180^\circ}(A_1)\\
		\end{pmatrix}.
$$

Now we compute $rot^{180^\circ}(A_1)$, $rot^{180^\circ}(B_1)$, $rot^{180^\circ}(C_1)$ and $rot^{180^\circ}(D_1)$:\\[1mm]

$$\begin{array}{lllll}
rot^{180^\circ}(A_1)&=rot^{180^\circ}(A+d\epsilon_{n,n-1})=
rot^{180^\circ}(dP+hQ+d\epsilon_{n,n-1})\\[1mm]
&=rot^{180^\circ}(dP)+
rot^{180^\circ}(hQ)+rot^{180^\circ}(d\epsilon_{n,n-1})\\[1mm]
&=rot^{180^\circ}(d)rot^{180^\circ}(P)+rot^{180^\circ}(h)rot^{180^\circ}(Q)\\[1mm]
&+rot^{180^\circ}(d)rot^{180^\circ}(\epsilon_{n,n-1})=hQ+dP+h\epsilon_{1,2}=A+h\epsilon_{1,2}=A_3,
\end{array}$$

$$\begin{array}{lllll}
rot^{180^\circ}(B_1)&=rot^{180^\circ}(B+e\epsilon_{n,n-1})=rot^{180^\circ}(fI+eP+gQ+e\epsilon_{n,n-1})\\[1mm]
&=rot^{180^\circ}(fI)+rot^{180^\circ}(eP)+rot^{180^\circ}(gQ)+rot^{180^\circ}(e\epsilon_{n,n-1})\\[1mm]
&=rot^{180^\circ}(f)rot^{180^\circ}(I)+rot^{180^\circ}(e)rot^{180^\circ}(P)
+rot^{180^\circ}(g)rot^{180^\circ}(Q)\\[1mm]
&+rot^{180^\circ}(e)rot^{180^\circ}(\epsilon_{n,n-1})=bI+aQ+cP+a\epsilon_{1,2}=C+a\epsilon_{1,2}\\[1mm]
&=C_3,
\end{array}$$
$$\begin{array}{lllll}
rot^{180^\circ}(C_1)&=rot^{180^\circ}(C+c\epsilon_{n,n-1})=rot^{180^\circ}(bI+aQ+cP+c\epsilon_{n,n-1})\\[1mm]
&=rot^{180^\circ}(bI)+rot^{180^\circ}(aQ)
+rot^{180^\circ}(cP)+rot^{180^\circ}(c\epsilon_{n,n-1})\\[1mm]
&=rot^{180^\circ}(b)rot^{180^\circ}(I)+rot^{180^\circ}(a)rot^{180^\circ}(Q)+rot^{180^\circ}(c)rot^{180^\circ}(P)\\[1mm]
&+rot^{180^\circ}(c)rot^{180^\circ}(\epsilon_{n,n-1})\\[1mm]
&=fI+eP+gQ+g\epsilon_{1,2}=B+a\epsilon_{1,2}=B_3,
\end{array}$$

$$\begin{array}{lllll}
rot^{180^\circ}(D_1)&=rot^{180^\circ}(B+C+(c+e)e_{n,n-1}+g\epsilon_{1,2})=rot^{180^\circ}(B)
+rot^{180^\circ}(C)\\[1mm]
&+rot^{180^\circ}((c+e)e_{n,n-1})+rot^{180^\circ}(g\epsilon_{1,2})=C+B\\[1mm]
&+rot^{180^\circ}(c+e)rot^{180^\circ}(\epsilon_{n,n-1})+rot^{180^\circ}(g)rot^{180^\circ}(\epsilon_{1,2})\\[1mm]
&=C+B+(a+g)\epsilon_{1,2}+c\epsilon_{n,n-1}=D_3.
\end{array}$$

Hence, the matrix has the form
$$\begin{pmatrix}
			A_3 & D_3 & O & O &\dots  &O& O & O\\
			B_3 & A_3 & C_3 &O& \dots &O & O & O\\
			O & B_3 & A_3 & C_3 &\dots &O & O & O\\
			\vdots & \vdots & \vdots & \vdots &\ddots &\vdots & \vdots & \vdots\\
			O & O & O &O&  \dots  &B_3 & A_3 & C_3\\
			O & O & O & O & \dots  &O& B_3 & A_3\\
		\end{pmatrix}.
$$
\end{proof}

By rotating $270^{\circ}$ the function $\phi$ we obtain the following results similar those given above.
\begin{thm}\label{thm270} Let $T_{\mathrm{R}}^{\phi_{270^\circ}} \colon \mathbb{Z}_{p}^{mn} \to \mathbb{Z}_{p}^{mn}$ be  the rule matrix which takes the finite Moore CA over the configuration $C(t)$ of order $m\times n$ to the configuration $C(t+1)$ under the boundary condition of $rot^{270^\circ}(\phi(\alpha))=rot^{270^\circ}(\phi(\rho))=\eta, \ \ rot^{270^\circ}(\phi(\theta))=rot^{270^\circ}(\phi(\pi))=\rho$. Then $T_{\mathrm{R}}^{\phi_{270^\circ}}$ has the following matrix form:
	
	\begin{equation}\label{equa1}
		T_{\mathrm{R}}^{\phi_{270^\circ}}=\begin{pmatrix}
			A_4 & D_4 & O & O &\dots  &O& O & O\\
			C_4 & A_4 & B_4 &O& \dots &O & O & O\\
			O & C_4 & A_4 & B_4 &\dots &O & O & O\\
			\vdots & \vdots & \vdots & \vdots &\ddots &\vdots & \vdots & \vdots\\
			O & O & O &O&  \dots  &C_4 & A_4 & B_4\\
			O & O & O & O & \dots  &O& F_4 & A_4\\
		\end{pmatrix},
	\end{equation}
	where $ A_4=A+d\epsilon_{n,n-1}, \ \ B_4=B+e \epsilon_{n,n-1}, \ \ C_4=C+c\epsilon_{n,n-1}, \ \ D_4=B+C+(c+e)\epsilon_{n,n-1},$    $F_4=C+(c+e)\epsilon_{n,n-1}, O, \epsilon_{n,n-1}\in \mathbf{M}_{n\times n}(\mathbb{Z}_{p}),  O \, \mbox{is the zero matrix and}\\  \epsilon_{n,n-1}\, \mbox{is a unit matrix}.$
\end{thm}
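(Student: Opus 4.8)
The plan is to deduce the form of $T_{\mathrm{R}}^{\phi_{270^\circ}}$ from the already established form \eqref{T90} of $T_{\mathrm{R}}^{\phi_{90^\circ}}$ by exactly the rotation argument used in the proof of Theorem~\ref{thm180}. The underlying observation is that rotating the side--labelling of the rectangular lattice by $180^\circ$ amounts to conjugating the rule matrix by the $180^\circ$ permutation of the cells; in block form this is the operation $rot^{180^\circ}$ acting simultaneously on the $m\times m$ array of blocks and inside each $n\times n$ block, together with the substitution $a\leftrightarrow e$, $b\leftrightarrow f$, $c\leftrightarrow g$, $d\leftrightarrow h$, $P\leftrightarrow Q$, $\epsilon_{1,2}\leftrightarrow\epsilon_{n,n-1}$ recorded in Table~\ref{ptable}. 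Since $\phi_{270^\circ}$ is the $270^\circ$--rotation of $\phi$ and $270^\circ=90^\circ+180^\circ$, its labelling is the $180^\circ$--rotation of the labelling of $\phi_{90^\circ}$, so I would set $T_{\mathrm{R}}^{\phi_{270^\circ}}=rot^{180^\circ}\bigl(T_{\mathrm{R}}^{\phi_{90^\circ}}\bigr)$. One cannot instead obtain it by rotating $T_{\mathrm{R}}^{\phi}$ through $270^\circ$ directly: that operation carries a block--tridiagonal matrix to a block--anti--tridiagonal one, whereas $rot^{180^\circ}$ preserves the tridiagonal shape and merely interchanges the super-- and sub--diagonals — which is exactly why Theorem~\ref{thm90} was proved by a fresh computation rather than by a rotation of $T_{\mathrm{R}}^{\phi}$.

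Carrying the argument out, I would first rotate the block array of \eqref{T90} through $180^\circ$: the diagonal blocks $A_2$ remain on the diagonal, the super--diagonal blocks $B_2$ move to the sub--diagonal and the sub--diagonal blocks $C_2$ to the super--diagonal, the corner block $F_2$ at position $(1,2)$ moves to position $(m,m-1)$, and the corner block $D_2$ at position $(m,m-1)$ moves to position $(1,2)$. Then I would apply $rot^{180^\circ}$ to each block, using $rot^{180^\circ}(A)=A$, $rot^{180^\circ}(B)=C$, $rot^{180^\circ}(C)=B$ and $rot^{180^\circ}(\epsilon_{1,2})=\epsilon_{n,n-1}$. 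A short computation, identical in flavour to the block computations displayed in the proof of Theorem~\ref{thm180}, gives $rot^{180^\circ}(A_2)=A+d\epsilon_{n,n-1}=A_4$, $rot^{180^\circ}(C_2)=B+e\epsilon_{n,n-1}=B_4$, $rot^{180^\circ}(B_2)=C+c\epsilon_{n,n-1}=C_4$, $rot^{180^\circ}(D_2)=B+C+(c+e)\epsilon_{n,n-1}=D_4$ and $rot^{180^\circ}(F_2)=C+(c+e)\epsilon_{n,n-1}=F_4$; inserting these into the rotated layout reproduces \eqref{equa1}.

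As a cross--check, and as a self--contained alternative, I would also verify \eqref{equa1} directly in the style of Theorem~\ref{thm0}: write down the images $T_{R}^{\phi_{270^\circ}}(e_{i,j})$ of all boundary basis matrices, obtained from the generic Moore formula \eqref{eij} by replacing each out--of--lattice unit $e_{0,\cdot},e_{m+1,\cdot},e_{\cdot,0},e_{\cdot,n+1}$ with its image under the boundary map dictated by the $270^\circ$--rotated placement of $\phi$ (the null condition along the horizontal borders, the reflexive condition along the vertical ones), combine these with the interior images \eqref{eij} in the $mn\times1$ column, and read off the $n\times n$ blocks. The step I expect to be the main obstacle — in either route — is the treatment of the four corner cells $e_{1,1},e_{1,n},e_{m,1},e_{m,n}$: each has three out--of--lattice neighbours, one of which is a diagonal corner cell $e_{0,0},e_{0,n+1},e_{m+1,0},e_{m+1,n+1}$ whose boundary value is precisely the one that is ambiguous and must be fixed by the ``strong'' conventions indicated in Figure~\ref{Rotations}. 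It is this choice that decides whether the extra terms accumulate in the corner block $D_4$ at position $(1,2)$ or in the corner block $F_4$ at position $(m,m-1)$, so the bulk of the care goes into making that convention consistent with the $270^\circ$--rotated picture; once it is pinned down, the remaining bookkeeping is routine and \eqref{equa1} follows.
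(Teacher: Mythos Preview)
Your proposal is correct, and the rotation computation checks out line by line: applying $rot^{180^\circ}$ to the block layout \eqref{T90} and then to each block, with the substitutions of Table~\ref{ptable}, one does get $rot^{180^\circ}(A_2)=A_4$, $rot^{180^\circ}(B_2)=C_4$, $rot^{180^\circ}(C_2)=B_4$, $rot^{180^\circ}(D_2)=D_4$, $rot^{180^\circ}(F_2)=F_4$, and the corner blocks $F_2,D_2$ land at positions $(m,m-1)$ and $(1,2)$ respectively, reproducing \eqref{equa1} exactly.

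The paper, however, does not take this route. It simply states that the proof is similar to that of Theorem~\ref{thm90}, i.e.\ a direct computation in the style of Theorem~\ref{thm0}: write out $T_R^{\phi_{270^\circ}}(e_{i,j})$ for all border cells using the $270^\circ$-rotated boundary assignment, combine with the interior formula \eqref{eij}, and read off the blocks. Your primary route --- rotating $T_{\mathrm R}^{\phi_{90^\circ}}$ by $180^\circ$, in direct analogy with the proof of Theorem~\ref{thm180} --- is more economical, since it reuses both Theorem~\ref{thm90} and the $rot^{180^\circ}$ machinery already set up, and it explains structurally why the five blocks $A_4,B_4,C_4,D_4,F_4$ are the $180^\circ$-images of $A_2,B_2,C_2,D_2,F_2$. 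The paper's direct approach is more self-contained and avoids having to argue that the ``strong'' corner conventions themselves transform correctly under the $180^\circ$ rotation of the lattice --- precisely the point you flagged as the main obstacle. Your cross-check via direct computation is in fact the paper's entire proof.
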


The proof of the theorem is similar to the proof of Theorem \ref{thm90}.

\section{Dynamics of CAs}

The global transition function is a defining characteristic of a cellular automaton. This function shows how each configuration is changed in one time step. In our case, the global transition function is $T_R: \mathbb{Z}_p^{mn}\to \mathbb{Z}_p^{mn}$, i.e. the rule matrix of CA, where $\mathbb{Z}_p^{mn}$ are all configurations in 2D CA$_{m\times n}$. Consequently, a cellular automaton (CA) can be conceptualized as a discrete-time dynamical system denoted by $\langle T_R, Z_p\rangle$.  The attractors explain how a dynamical system behaves assymptotically. Attractors are states of the system towards which the system is attracted. The system may converge to a specific fixed-point attractor or to a periodic limit cycle attractor. Sometimes, the system represents chaotic behavior. Limit sets have been proposed as potential formalizations of attractors within the cellular automata theory. In the context of a cellular automaton, a limit set encompasses all configurations that can arise after arbitrarily long computations. The concept of "nilpotency" plays a crucial role in the dynamical behavior of CAs. A cellular automaton is called nilpotent if its limit set contains just one configuration.  \cite{Culik(1990)} have shown that for two or more dimension, the nilpotency of CAs is undecidable. In this section, we study some aspects of dynamical systems of 2D CA with mixed boundary conditions. In particular, for reasons of convenience, we consider the von Neumann neighborhood, and, in accordance with Remark \ref{rem1}, we assume $a=c=e=g=0$.

 If a square matrix $T$ is nilpotent, it means that there exists a positive integer $k$ such that $T^k=O$, where
$O$ is the zero matrix. In fact, for a given rule matrix  $T$ of 2D CA$_{m\times n}$ we have the following: if $T$ is nilpotent, then for any initial configuration, the trajectory will ultimately converge to the zero state.

\begin{prop}  Let If $T_{\mathrm{R}}^{\phi}$ be a rule of 2D CA with the boundary condition (\ref{nrfunc0}) and $d=f=0$. Then $T_{\mathrm{R}}^{\phi}$ is nilpotent.
\end{prop}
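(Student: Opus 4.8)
The plan is to substitute the special parameter values into the block form of $T_{\mathrm{R}}^{\phi}$ provided by Theorem~\ref{thm0} and then observe that the resulting matrix is strictly lower triangular. Recall that throughout this section we work with the von Neumann neighborhood, so $a=c=e=g=0$; combining this with the hypothesis $d=f=0$ leaves $b$ and $h$ as the only coefficients that may be nonzero. Consequently $A=dP+hQ=hQ$, $B=fI+eP+gQ=O$ and $C=bI+cP+aQ=bI$, and the corrected corner blocks appearing in \eqref{T0} collapse to $A_1=hQ$, $B_1=O$, $C_1=bI$ and $D_1=B+C+(c+e)\epsilon_{n,n-1}+g\epsilon_{1,2}=bI$.

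With these values $T_{\mathrm{R}}^{\phi}$ becomes the block lower bidiagonal matrix whose diagonal blocks are all $hQ$, whose sub-diagonal blocks (including the last one, $D_1$) are all $bI$, and all of whose blocks strictly above the diagonal are $O$. The key point I would then record is that this $mn\times mn$ matrix is \emph{strictly lower triangular} over $\mathbb{Z}_{p}$: the nonzero entries of a diagonal block $hQ$ lie only on the first sub-diagonal within that block, hence strictly below the main diagonal of the full matrix, while each sub-diagonal block $bI$ occupies positions whose row and column indices differ by exactly $n$, again strictly below the main diagonal. Equivalently, one can verify this directly from the images $T_{R}^{\phi}(e_{i,j})$ — the interior formula \eqref{eij} together with the boundary formulas preceding Theorem~\ref{thm0} — since, once the parameters are zeroed out, every $T_{R}^{\phi}(e_{i,j})$ is a $\mathbb{Z}_{p}$-combination of $b\,e_{i-1,j}$ and $h\,e_{i,j-1}$, each of which carries a strictly smaller index than $(i-1)n+j$ under the identification $E_{(i-1)n+j}=e_{i,j}$ used in the proof of Theorem~\ref{thm0}.

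A strictly lower triangular matrix is nilpotent; its characteristic polynomial is $t^{mn}$, so $(T_{\mathrm{R}}^{\phi})^{mn}=O$ by the Cayley--Hamilton theorem and hence $T_{\mathrm{R}}^{\phi}$ is nilpotent. (One could instead argue by writing $T_{\mathrm{R}}^{\phi}=D+N$ with $D$ the block-diagonal part and $N$ the block-sub-diagonal part; here $DN=ND$ because $D$ is uniformly $hQ$ and $N$ is uniformly $bI$, and since $D^{n}=O$ and $N^{m}=O$ the binomial theorem yields $(D+N)^{m+n}=O$.)

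There is essentially no obstacle in this argument; the one point deserving explicit care is that the last block row of \eqref{T0} carries $D_1$ rather than $C_1$, so one must confirm that after the substitution $D_1$ also reduces to $bI$ (it does, as $c=e=g=0$ annihilates its remaining terms), ensuring the strict lower triangularity is not destroyed at the boundary. I would make that verification explicit and then finish with the Cayley--Hamilton remark.
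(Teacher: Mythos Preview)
Your proposal is correct and follows essentially the same approach as the paper: the paper's entire argument is the one-line observation that $T_{\mathrm{R}}^{\phi}$ is strictly lower triangular under $d=f=0$ (together with the ambient von Neumann assumption $a=c=e=g=0$), and you have simply fleshed out why this is so via the block form in Theorem~\ref{thm0}. Your explicit check that $D_{1}$ also reduces to $bI$ is a useful detail the paper omits.
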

The proof of this result leads from the fact that $T_{\mathrm{R}}^{\phi}$ is a strictly lower triangular matrix under the condition $d=f=0$.

On the other hand, the dynamical system $\langle T_R, Z_p\rangle$ is over the finite field $\mathbb{Z}_p$ and the field with the usual topology induced by the metric is not compact. That is why, the dynamics of 2D CA over $\mathbb{Z}_p$ is different from real or complex cases. Now, let us find fixed points. Since $T_{\mathrm{R}}^{\phi}$ is a linear operator, $0\in \mathbb{Z}_p^{mn}$ is a fixed point. Therefore, we check for other fixed points and we shall analyse the homogeneous linear system
\begin{equation}\label{system}
(T_{\mathrm{R}}^{\phi}-I')x=0,
\end{equation}

where $I'\in M_{mn\times mn}(\mathbb{Z}_p)$ is an identity matrix.

The block matrix form of $T_{\mathrm{R}}^{\phi}-I'$ is
\begin{equation}\label{matrix}T_{\mathrm{R}}^{\phi}-I'=\begin{pmatrix}
			A_1-I & fI & O & O &\dots  &O& O & O\\
			bI & A_1-I & fI &O& \dots &O & O & O\\
			O & bI & A_1-I & fI &\dots &O & O & O\\
			\vdots & \vdots & \vdots & \vdots &\ddots &\vdots & \vdots & \vdots\\
			O & O & O &O&  \dots  &bI & A_1-I & fI\\
			O & O & O & O & \dots  &O& (b+f)I & A_1-I\\
		\end{pmatrix}.
\end{equation}

Let us assume $f=0$. Then $\det(T_{\mathrm{R}}^{\phi}-I')=(\det(A_1-I))^m$.

$$A_1-I=\begin{pmatrix}
			-1 & d & 0 & 0 &\dots  &0& 0 & 0\\
			h & -1 & d &O& \dots &0 & 0 & 0\\
			0 & h & -1 & d &\dots &0 & 0 & 0\\
			\vdots & \vdots & \vdots & \vdots &\ddots &\vdots & \vdots & \vdots\\
			0 & 0 & 0 &0&  \dots  &h & -1 & d\\
			0 & 0 & 0 & 0 & \dots  &0& h+d & -1\\
		\end{pmatrix}.$$

By using methods of evaluating higher-order determinants, for the determinant of $A_1-I$ we obtain the following result.
\begin{prop}\label{detprop} Let $A_1-I$ be a matrix $n\times n$. Then
$$\det(A_1-I)=-\Delta_{n-1}-d(h+d)\Delta_{n-2}$$

where
$\Delta_{n}=\frac{\sum\limits_{k=0}^{[\frac{n}{2}]}C_{n+1}^{2k+1}(-1)^{n-2k}(1-4hd)^{k}}{2^{n}}$ and $[\frac{n}{2}]$ is the integer part of $\frac{n}{2}$. Moreover, we can simplify this determinant under some conditions:

\begin{itemize}

\item if $d=0$, then $\det(A_1-I)=(-1)^n$;

\item if $d\neq0$ and $h=0$, then $\det(A_1-I)=(-1)^{n-1}(d^2-1)$;

\item if $dh\neq0$, $d+h=-1$, and $d\neq h$ then $\det(A_1-I)=\frac{d^{n}-h^{n}}{d-h}h$;

\item if $dh\neq0$, $d+h=-1$ and $d=h$, then $\det(A_1-I)=nd^n$.
\end{itemize}

\end{prop}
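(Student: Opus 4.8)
The plan is to first recognise $\Delta_n$ as the determinant of the \emph{unperturbed} $n\times n$ tridiagonal matrix $T_n$ carrying $-1$ on the main diagonal, $d$ on the superdiagonal and $h$ on the subdiagonal, so that $A_1-I$ differs from $T_n$ only in its $(n,n-1)$ entry, which is $h+d$ instead of $h$. Expanding $\det T_n$ along the first row yields the three--term recurrence $\det T_n=-\det T_{n-1}-hd\,\det T_{n-2}$ with $\det T_0=1$, $\det T_1=-1$; solving it via the characteristic roots $r_{\pm}=\tfrac{1}{2}\bigl(-1\pm\sqrt{1-4hd}\bigr)$ gives $\det T_n=\dfrac{r_+^{\,n+1}-r_-^{\,n+1}}{r_+-r_-}$, and substituting the $r_\pm$ and applying the binomial theorem (only odd--indexed terms survive the subtraction, and the surviving factor $\sqrt{1-4hd}$ cancels $r_+-r_-$) reproduces exactly the displayed formula $\Delta_n=2^{-n}\sum_{k=0}^{[\frac{n}{2}]}\binom{n+1}{2k+1}(-1)^{n-2k}(1-4hd)^{k}$. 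From then on I use $\Delta_n=\det T_n$ and the recurrence $\Delta_n=-\Delta_{n-1}-hd\,\Delta_{n-2}$ freely.

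Next I would compute $\det(A_1-I)$ by Laplace expansion along the last row, whose only nonzero entries are $h+d$ in column $n-1$ and $-1$ in column $n$. Deleting row $n$ and column $n$ leaves precisely $T_{n-1}$, so the $(n,n)$--cofactor is $\Delta_{n-1}$; deleting row $n$ and column $n-1$ leaves a matrix in which the former column $n$ has the single nonzero entry $d$ (inherited from row $n-1$), and a second expansion along that column peels off the factor $d$ and leaves $T_{n-2}$, so that minor equals $d\,\Delta_{n-2}$ and, after accounting for the sign $(-1)^{2n-1}$, the $(n,n-1)$--cofactor is $-d\,\Delta_{n-2}$. Hence $\det(A_1-I)=-(h+d)\,d\,\Delta_{n-2}-\Delta_{n-1}=-\Delta_{n-1}-d(h+d)\Delta_{n-2}$, which by the recurrence also equals $\Delta_n-d^{2}\Delta_{n-2}$; the latter form is the handy one for the corollaries.

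Finally, the four simplifications come from substitution. If $d=0$ then $A_1-I$ is lower triangular with $-1$'s on the diagonal (equivalently $\Delta_k=(-1)^k$), so the determinant is $(-1)^n$. If $d\neq0$ and $h=0$ then again $\Delta_k=(-1)^k$, giving $(-1)^{n-1}(d^{2}-1)$. If $dh\neq0$ and $d+h=-1$, then $x^{2}+x+hd=(x-d)(x-h)$, so $d$ and $h$ are exactly the characteristic roots; thus $\Delta_k=\frac{d^{k+1}-h^{k+1}}{d-h}$ when $d\neq h$ and $\Delta_k=(k+1)d^{k}$ when $d=h$, and inserting these into $-\Delta_{n-1}-d(h+d)\Delta_{n-2}$ together with $h+d=-1$ produces the claimed expressions (this last algebra is where I would be most careful to double-check the final closed forms). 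The only genuine obstacle is bookkeeping --- keeping signs and index ranges straight through the two nested cofactor expansions, handling the degenerate small cases $n=1,2$ separately, and checking that the elementary binomial manipulation really collapses the double sum into the single sum written for $\Delta_n$; none of the steps is conceptually deep.
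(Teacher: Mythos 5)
Your identification of $\Delta_n$ with the determinant of the pure tridiagonal matrix $T_n$ (diagonal $-1$, superdiagonal $d$, subdiagonal $h$), the recurrence $\Delta_n=-\Delta_{n-1}-hd\,\Delta_{n-2}$ with $\Delta_0=1$, $\Delta_1=-1$, the Binet-type closed form via the roots of $x^2+x+hd=0$, and the two-step cofactor expansion along the last row are all correct, and they do yield exactly the displayed identity $\det(A_1-I)=-\Delta_{n-1}-d(h+d)\Delta_{n-2}$ together with the first two special cases. The paper itself gives no argument beyond the phrase ``methods of evaluating higher-order determinants,'' so there is no competing proof to compare with; your route is the natural one.

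However, the step you yourself flagged for a double-check is precisely where the argument fails. With $dh\neq0$ and $d+h=-1$ the characteristic roots are indeed $d$ and $h$, so $\Delta_k=\frac{d^{k+1}-h^{k+1}}{d-h}$ (resp.\ $\Delta_k=(k+1)d^k$ when $d=h$); but substituting into $-\Delta_{n-1}-d(h+d)\Delta_{n-2}=-\Delta_{n-1}+d\,\Delta_{n-2}$ gives $\frac{h^{n}-d\,h^{n-1}}{d-h}=-h^{\,n-1}$ in the first case and $-n\,d^{\,n-1}+(n-1)d^{\,n-1}=-d^{\,n-1}$ in the second, not $\frac{d^{n}-h^{n}}{d-h}\,h$ and $n\,d^{n}$. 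A direct check confirms this: for $n=3$, $d=1$, $h=-2$ the matrix has determinant $-4=-h^{2}$, while $\frac{d^{3}-h^{3}}{d-h}\,h=-6$; the stated and the substituted expressions coincide only for $n=2$. So your assertion that the substitution ``produces the claimed expressions'' is false: carried out honestly, your (correct) computation shows that the last two items of the proposition as printed do not follow from the main formula and are in fact contradicted by it, the values consistent with the formula being $-h^{\,n-1}$ and $-d^{\,n-1}$. To complete a sound proof you would have to either establish these corrected closed forms or state explicitly that the printed ones cannot be derived; as written, the final step of your proposal would not survive the algebra you postponed.
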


By using proposition \ref{detprop} we get solutions of the system (\ref{system}), i.e. $\rm{Fix}(T_{\mathrm{R}}^{\phi})$ is the set of fixed points $T_{\mathrm{R}}^{\phi}$ .
\begin{prop} Let $T_{\mathrm{R}}^{\phi}$ be a rule of 2D CA$_{m\times n}$ with the boundary condition (\ref{nrfunc0}) and $f=0$. Then
\begin{itemize}

\item if $d=0$, then $\rm{Fix}(T_{\mathrm{R}}^{\phi})=\{0\}$;

\item if $d\neq0,1$ and $h=0$, then $\rm{Fix}(T_{\mathrm{R}}^{\phi})=\{0\}$;

\item if $dh\neq0$, $d+h=-1$ and $d\neq h$, then $\rm{Fix}(T_{\mathrm{R}}^{\phi})=\{0\}$;

\item if $dh\neq0$, $d+h=-1$ and $d=h$, then $\rm{Fix}(T_{\mathrm{R}}^{\phi})=\{0\}$.
\end{itemize}

\end{prop}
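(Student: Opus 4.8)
The plan is to identify $\mathrm{Fix}(T_{\mathrm{R}}^{\phi})$ with the null space $\ker(T_{\mathrm{R}}^{\phi} - I')$ and to exploit the block description \eqref{matrix}. Because we work over the field $\mathbb{Z}_p$, the homogeneous system \eqref{system} has only the trivial solution if and only if $\det(T_{\mathrm{R}}^{\phi} - I') \not\equiv 0 \pmod p$; thus the entire proposition reduces to a single nonvanishing statement for a determinant.

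First I would record the simplification already indicated in the text: under the von Neumann specialization $a=c=e=g=0$ together with the standing hypothesis $f=0$, every superdiagonal block $fI$ of $T_{\mathrm{R}}^{\phi} - I'$ disappears, so this matrix is block lower bidiagonal with all $m$ diagonal blocks equal to $A_1 - I$ (the last subdiagonal block $(b+f)I$ simply becomes $bI$ and plays no role). Hence $\det(T_{\mathrm{R}}^{\phi} - I') = \bigl(\det(A_1 - I)\bigr)^{m}$, and it suffices to check $\det(A_1 - I) \not\equiv 0 \pmod p$ in each of the four regimes.

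The next step is a case-by-case substitution of the closed forms from Proposition~\ref{detprop}. For $d=0$ one obtains $(-1)^n$, a unit in $\mathbb{Z}_p$. For $h=0$, $d\neq 0$ one obtains $(-1)^{n-1}(d^2-1)$, which is invertible exactly when $d \not\equiv \pm 1 \pmod p$. For $dh\neq 0$, $d+h=-1$, $d\neq h$ one obtains $\tfrac{d^n - h^n}{d-h}\,h$, and for $d=h$ one obtains $n d^n$; in each case one shows this is a nonzero element of $\mathbb{Z}_p$, and then the triviality of $\ker(T_{\mathrm{R}}^{\phi} - I')$ yields $\mathrm{Fix}(T_{\mathrm{R}}^{\phi}) = \{0\}$.

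The block-triangular reduction and the case $d=0$ are routine; the real obstacle is the nonvanishing \emph{modulo $p$} in the remaining cases. Note that $(-1)^{n-1}(d^2-1)$ can vanish when $d \equiv -1$, that $\tfrac{d^n-h^n}{d-h}\,h = 0$ whenever $d/h$ is a nontrivial $n$-th root of unity in $\mathbb{Z}_p$, and that $n d^n = 0$ whenever $p \mid n$. So in carrying out the proof I would either strengthen the hypotheses to exclude these coincidences (e.g. $d \not\equiv -1$ in the second bullet, $d^n \not\equiv h^n$ in the third, $p \nmid n$ in the fourth) or state them as standing arithmetic assumptions; granting this, each determinant is a unit of $\mathbb{Z}_p$ and the conclusion $\mathrm{Fix}(T_{\mathrm{R}}^{\phi}) = \{0\}$ follows at once.
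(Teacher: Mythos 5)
Your proposal follows exactly the route the paper intends (the paper gives no detailed proof, only the remark that $f=0$ makes $T_{\mathrm{R}}^{\phi}-I'$ block lower triangular, so $\det(T_{\mathrm{R}}^{\phi}-I')=(\det(A_1-I))^m$, followed by the citation of Proposition \ref{detprop}): reduce the fixed-point question to the nonvanishing of $\det(A_1-I)$ over $\mathbb{Z}_p$ and read off the four closed forms. So in method you and the paper coincide.

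Your caveats in the last paragraph are not pedantry but a genuine defect of the statement itself, which the paper's one-line justification glosses over. For instance, with $h=0$, $d\equiv-1\pmod p$, $p>2$, the hypothesis ``$d\neq 0,1$'' is satisfied, yet $\det(A_1-I)=(-1)^{n-1}(d^2-1)\equiv 0$, so $T_{\mathrm{R}}^{\phi}-I'$ is singular and $\mathrm{Fix}(T_{\mathrm{R}}^{\phi})\neq\{0\}$; similarly the third bullet fails whenever $d/h$ is a nontrivial $n$-th root of unity in $\mathbb{Z}_p$ (e.g. $p=7$, $n=3$, $d=4$, $h=2$), and the fourth fails whenever $p\mid n$. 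So your proof is complete only after adding the arithmetic hypotheses you list ($d\not\equiv-1$, $d^n\not\equiv h^n$, $p\nmid n$), and with them it is correct; without them no proof can exist, because the conclusion is false. Flagging this, rather than silently asserting nonvanishing as the paper does, is the right call.
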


\begin{exam} Let $m=n=3$ and $d=1$, $b=f=h=0$. Then
$$A_1-I=\begin{pmatrix}
-1&1&0&0&0&0&0&0&0\\
0&-1&1&0&0&0&0&0&0\\
0&1&-1&0&0&0&0&0&0\\
0&0&0&-1&1&0&0&0&0\\
0&0&0&0&-1&1&0&0&0\\
0&0&0&0&0&0&-1&1&0\\
0&0&0&0&0&0&0&-1&1\\
0&0&0&0&0&0&0&1&-1
\end{pmatrix}$$

According to the Proposition \ref{detprop}, $\det(A_1-I)=0$. By the theorem on homogeneous linear systems, there are some solutions of $(T_{\mathrm{R}}^{\phi}-I')x=0$ except for $0$ and these solutions are $(x_1,x_1,x_1,x_4,x_4,x_4,x_7,x_7,x_7,)$, $x_i\in \mathbb{Z}_p$.

\end{exam}

\section{Reversibility of the rule matrix $T^{\phi}_{\mathrm{R}}$.}

In the present section, we shall establish an algorithm to decide whether the 2D (linear) CA determined by the Moore rule under the boundary conditions of non-bijective function $\phi$ is reversible or not. We already have found the rule matrix $T^{\phi}_{\mathrm{R}}$ corresponding to the 2D finite CA with the function $\phi$. Thus, we can state the following relation between the column vectors $X(t)$  and the rule matrix $T^{\phi}_{\mathrm{R}}$:
\[X^{(t+1)}=T^{\phi}_{\mathrm{R}}\cdot X^{(t)} \quad (\mathrm{mod}\ p).\]

If the rule matrix $T_{\mathrm{R}}$ is non-singular, then we have
\[X^{(t)}=(T^{\phi}_{\mathrm{R}})^{-1}\cdot X^{(t+1)} \quad (\mathrm{mod}\ p).\]

Thus, our main aim is to study whether the rule matrix $T_{\mathrm{R}}^{\phi}$ in (\ref{T0}) is invertible or not. It is well known that the 2D finite CA is reversible if and only if its rule matrix $T_{\mathrm{R}}^{\phi}$ is non-singular. If the rule matrix $T_{\mathrm{R}}^{\phi}$ has full rank, then it is invertible, so the 2D finite CA is reversible, otherwise it is irreversible.

Further, we use the following auxiliary lemma.

\begin{lem}{\label{lem1}}
Let $T\in\mathbf{M}_{mn\times mn}(\mathbb{Z}_{p})$ be a matrix of the following form:
\begin{align}\label{r1}
\begin{pmatrix}
A_{m} & X & O & \dots & O & O \\
B_{m-1} & A_{m-1} & X & \dots & O & O \\
O & B_{m-2} & A_{m-2} & \dots & O & O \\
\vdots & \vdots & \vdots & \ddots & \vdots & \vdots \\
O & O & O & \dots & A_{2} & X \\
O & O & O & \dots & B_{1} & A_{1}
\end{pmatrix},
\end{align}
where all submatrices are $n\times n$, $O$ is the zero matrix. If the submatrix $X$ has full rank, then
\begin{align*}
rank(T)=(m-1)n+rank(P_{m}),
\end{align*}
with
$P_{1}=A_{1}, \ \ P_{2}=-B_{1}-A_{1}X^{-1}A_{2}, \ \ P_{k}=-P_{k-2}X^{-1}B_{k-1}-P_{k-1}X^{-1}A_{k}, \ \  k\in\{3,\ldots, m\}.$
\end{lem}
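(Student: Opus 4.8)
The plan is to reduce $T$ by rank‑preserving block operations to a form from which the rank can be read off. Since $p$ is prime, $\mathbb{Z}_p$ is a field, so a square block of full rank is invertible; in particular $X^{-1}$ exists, and this is what drives the whole argument. Denote the block rows of $T$ by $R_1,\dots,R_m$ from top to bottom, so that for $1\le j\le m-1$ the row $R_j$ carries the pivot block $X$ in block column $j+1$, has diagonal block $A_{m-j+1}$ and, when $j\ge 2$, subdiagonal block $B_{m-j+1}$, while $R_m=(O,\dots,O,B_1,A_1)$.

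The first step is to \emph{sweep the bottom block row leftwards} using the $X$'s as pivots. For $s=1,\dots,m-1$, replace $R_m$ by $R_m-\Theta_s X^{-1}R_{m-s}$, where $\Theta_s$ is the block currently occupying block column $m-s+1$ of $R_m$; this is well posed because $R_{m-s}$ holds $X$ in block column $m-s+1$, and since no row other than $R_m$ is ever modified, each $R_{m-s}$ used is the original one. A short induction on $s$ shows that after stage $s$ (for $s\le m-2$) the row $R_m$ has exactly two nonzero blocks, $-P_sX^{-1}B_{s+1}$ in block column $m-s-1$ and $P_{s+1}$ in block column $m-s$, where $P_1=A_1$ and $P_k=-P_{k-2}X^{-1}B_{k-1}-P_{k-1}X^{-1}A_k$: indeed, passing from stage $s$ to stage $s+1$ annihilates $P_{s+1}$, adds to the block $-P_sX^{-1}B_{s+1}$ already sitting in column $m-s-1$ the fill‑in $-P_{s+1}X^{-1}A_{s+2}$ coming from the diagonal block of $R_{m-s-1}$ — giving $P_{s+2}$ — and produces $-P_{s+1}X^{-1}B_{s+2}$ one column further left. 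The last stage $s=m-1$ uses $R_1$, which has no subdiagonal block and so creates no fill‑in further left; it leaves $R_m=(P_m,O,\dots,O)$, while $R_1,\dots,R_{m-1}$ remain untouched.

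The second step reads off the rank. Cyclically permute the block columns so that block column $1$ goes last; this does not change the rank. In the resulting matrix the rows $R_1,\dots,R_{m-1}$, restricted to the first $m-1$ block columns, form a block lower triangular matrix $L$ with the invertible block $X$ all along its block diagonal, hence $L$ is invertible of rank $(m-1)n$; the last block row is $(O,\dots,O,P_m)$; and the last block column restricted to the top $(m-1)n$ rows is some block vector $v$ (which merely collects $A_m$ and $B_{m-1}$). Thus the matrix is $\begin{pmatrix}L & v\\ O & P_m\end{pmatrix}$, and multiplying it on the right by $\begin{pmatrix}I & -L^{-1}v\\ O & I\end{pmatrix}$ (still rank‑preserving) brings it to the block‑diagonal form $\begin{pmatrix}L & O\\ O & P_m\end{pmatrix}$. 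Therefore $\operatorname{rank}(T)=\operatorname{rank}(L)+\operatorname{rank}(P_m)=(m-1)n+\operatorname{rank}(P_m)$, as asserted.

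The main obstacle is the induction in the sweeping step: one must verify that each pivoting annihilates exactly the intended block, that the single new block created one column to its left is correctly accounted for, and that these pieces reassemble precisely into the $P_k$‑recursion; the degenerate cases $m=1,2$ (nothing to sweep, or the first and last stages coinciding) and the boundary stages (where $R_{m-s}$ lacks a subdiagonal block) deserve a separate glance. Since only $\operatorname{rank}(P_m)$ enters the final formula, any sign convention in the intermediate $P_k$ is immaterial to the conclusion.
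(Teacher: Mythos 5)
Your argument is correct and is essentially the paper's own proof: you sweep the last block row leftwards with the row operations $R_m \mapsto R_m - P_sX^{-1}R_{m-s}$, arrive at $(P_m, O, \dots, O)$, and read off $\operatorname{rank}(T)=(m-1)n+\operatorname{rank}(P_m)$ from the invertible block-triangular complement built from the pivots $X$ (your explicit column permutation and elimination of the last block column merely makes rigorous a step the paper only asserts). One small caution: the sweep actually yields $P_2=B_1-A_1X^{-1}A_2$ (as in the paper's proof, not the $-B_1-A_1X^{-1}A_2$ of the lemma's statement), and since the recursion mixes $P_{k-2}$ and $P_{k-1}$, intermediate sign conventions are not in fact immaterial to $\operatorname{rank}(P_m)$ --- though this discrepancy originates in the paper itself.
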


\begin{proof}
Assume that we are given the matrix $T$. Firstly, we multiply the ($m-1$)th row block by $-A_{1}X^{-1}$ from the left and add it to the last row block. Thus, we get
$$\begin{pmatrix}
A_{m} & X & O & \dots & O&O & O \\
B_{m-1} & A_{m-1} & X & \dots &O& O & O \\
\vdots & \vdots & \vdots & \ddots & \vdots & \vdots& \vdots \\
O & O & O & \dots &B_{2}& A_{2} & X \\
O & O & O & \dots & O&B_{1} & A_{1}
\end{pmatrix}\rightarrow\begin{pmatrix}
A_{m}& X & O & \dots & O & O&O \\
B_{m-1} & A_{m-1} & B_{2} & \dots & O & O&O \\
\vdots & \vdots & \vdots & \ddots & \vdots & \vdots& \vdots \\
O & O & O & \dots & B_{2}&A_{2} & X \\
O & O & O & \dots & -A_{1}X^{-1}B_{2}&B_{1}-A_{1}X^{-1}A_{2} & O
\end{pmatrix}.$$

Now, let us use the denotations $P_{2}=B_{1}-A_{1}X^{-1}A_{2}, \ Q_{2}=-A_{1}X^{-1}B_{2}$. Then, we multiply the ($m-2$)th row block by $-P_{2}X^{-1}$ from the left and add it to the last row block. Thus, we get
$$\begin{pmatrix}
A_{m} & X & O & \dots  & O& O&O \\
B_{m-1} & A_{m-1} & X & \dots  & O& O&O \\
\vdots & \vdots & \vdots & \ddots  & \vdots& \vdots& \vdots \\
O & O & O & \dots &B_{2}& A_{2} & X \\
O & O & O & \dots &Q_{2}& P_{2} & O
\end{pmatrix} \rightarrow \begin{pmatrix}
A_{m} & X & O & \dots & O& O &O\\
B_{m-1} & A_{m-1} & X & \dots & O & O&O \\
\vdots & \vdots & \vdots & \ddots  & \vdots& \vdots& \vdots \\
O & O & O & \dots &B_{2} & A_{2}&X \\
O & O & O & \dots &P_{3} & O&O
\end{pmatrix}.$$
Similarly, in the $k$-th step we multiply the $(m-k)$th row block by $-P_{k}X^{-1}$ from the left and add it to the last row block. Thus, we get
$$\begin{pmatrix}
A_{m} & X & \dots & O & O & \dots & O & O \\
B_{m-1} & A_{m-1} & \dots & O & O & \dots & O & O\\
O & B_{m-2} & \cdot & O & O & \dots & O & O \\
\vdots & \vdots & \ddots & \ddots & \vdots & \vdots & \vdots & \vdots \\
O & O & \dots & A_{k+1} & X & \dots & \dots & O \\
\vdots & \vdots & \vdots & \vdots & \ddots & \ddots & \vdots & \vdots \\
O & O & \dots & O & O & \cdot & A_{2} & X \\
O & O & \dots & P_{k+1} & O & \dots & O & O
\end{pmatrix}.$$
Finally, in the last step we multiply the first row block by $-P_{m-1}X^{-1}$ from the left and add it to the last row block. Hence, we obtain
$$\begin{pmatrix}
A_{m} & \vline& X & O & \dots & O &  O \\
B_{m-1} & \vline& A_{m-1} & X & \dots & O  & O \\
O & \vline&  B_{m-2} & A_{m-2} & \dots & O &  O \\
\vdots & \vline & \vdots & \vdots & \ddots & \vdots &  \vdots \\
O & \vline&  O & O & \dots & A_{2}  & X \\
\hline
P_{m} & \vline& O & O & \dots & O &  O
\end{pmatrix}=\begin{pmatrix}

 A_{m} &\vline &  &  &  &   \\
 B_{m-1} & \vline &  &  &  &   \\
 \vdots & \vline &  & \Delta &  &    \\
 O & \vline &  &  &  &   \\
 O & \vline&  &  &  &   \\
\hline
 P_{m}&  \vline & O & O & \dots & O  \\

\end{pmatrix}.$$
Further, the $X$ has full rank which implies that the matrix $\Delta$ has full rank. Therefore, we obtain the required result $rank(T)=(m-1)n+rank(P_{m})$.
\end{proof}

\begin{rem}{\label{rem2}}
Since the rank of the transpose of the matrix is equal to rank of itself, then we can prove similar result to Lemma~\ref{lem1} for matrices $T\in\mathbf{M}_{mn\times mn}(\mathbb{Z}_{p})$  of the following form:
\begin{align}\label{r2}
\begin{pmatrix}
A_{1} & B_{1} & O & \dots & O & O \\
X & A_{2} & B_{2} & \dots & O & O \\
O & X & A_{3} & \dots & O & O \\
\vdots & \vdots & \vdots & \ddots & \vdots & \vdots \\
O & O & O & \dots & A_{m-1} & B_{m-1} \\
O & O & O & \dots & X & A_{m}
\end{pmatrix},
\end{align}
where all submatrices are $n\times n$, $O$ is the zero matrix, $I$ is the identity matrix. If the submatrix $X$ has full rank, then
\[ rank(T)=(m-1)n+rank(P_{m}),\]
where $ P_{1}=A_{1}, \ \ P_{2}=-A_{1}X^{-1}B_{2}, \ \ P_{k}=-P_{k-2}X^{-1}B_{k-1}-P_{k-1}X^{-1}A_{k},  \ \ k\in\{3, \ldots, m\}.$
\end{rem}

Now, we give the algorithm of computing the rank of the rule matrix $T_{{\rm Rule}}$. Here, we can use the direct application of Lemma \ref{lem1} to the rule matrices.

\begin{thm}{\label{thmr}}
Let  $T^{\phi}_{{\rm R}}$ be the rule matrix in (\ref{T0}). If the matrix $B_1$ has the full rank, then
\[rank(T^{\phi}_{{\rm R}})=(m-1)n+rank(P_{m}),\]
where the submatrix $P_{m}$ is computed as in Lemma \ref{lem1}.
\end{thm}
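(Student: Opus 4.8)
The plan is to apply Lemma \ref{lem1} directly to the rule matrix $T^{\phi}_{\mathrm{R}}$ given in \eqref{T0}. First I would observe that the block structure of $T^{\phi}_{\mathrm{R}}$ in \eqref{T0} matches, up to relabeling, the template \eqref{r1} of Lemma \ref{lem1}: it is an $m\times m$ block tridiagonal matrix with $n\times n$ blocks, having the repeated block $A_1$ on the main diagonal, the repeated block $B_1$ on the superdiagonal, the repeated block $C_1$ on the subdiagonal, and the single exceptional block $D_1$ in the $(m,m-1)$ position. To match the hypothesis of Lemma \ref{lem1}, one sets $X=B_1$ for the common superdiagonal block, so the role of "$A_k$" is played by $A_1$ for every $k$ and the role of "$B_k$" is played by $C_1$ for $k\in\{2,\dots,m-1\}$ and by $D_1$ for the bottom block row.

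The key step is then to check that the required hypothesis $X=B_1$ has full rank is exactly the assumption in the statement, and to invoke Lemma \ref{lem1} verbatim. Since Remark \ref{rem2} (or the symmetric reading of Lemma \ref{lem1}) already handles block matrices with the single exceptional block in the last row, the presence of $D_1$ instead of $C_1$ in the $(m,m-1)$ block does not obstruct anything: in the recursion for $P_k$ one simply uses the actual block occupying each position, so $P_1=A_1$, $P_2=-D_1-A_1B_1^{-1}A_1$ is replaced at the appropriate final stage, and in general $P_k=-P_{k-2}B_1^{-1}C_1-P_{k-1}B_1^{-1}A_1$ for the intermediate steps with the bottom block $C_1$ replaced by $D_1$ when it appears. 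The conclusion $rank(T^{\phi}_{\mathrm{R}})=(m-1)n+rank(P_m)$ follows immediately, where $P_m$ is the terminal term of the recursion described in Lemma \ref{lem1}.

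Honestly, there is essentially no obstacle here: the theorem is a direct corollary of Lemma \ref{lem1}, and the only thing to verify carefully is the bookkeeping of which block sits in which position (in particular that $B_1$, not $C_1$, is the block that must be inverted, because the elimination in Lemma \ref{lem1} proceeds from the bottom row block upward using the superdiagonal block $X$). The one point worth a sentence of justification is that $B_1$ being invertible over $\mathbb{Z}_p$ is equivalent to $B_1$ having full rank $n$, which is true since $\mathbb{Z}_p$ is a field. Thus the proof is: match \eqref{T0} to the form \eqref{r1}, identify $X=B_1$, note $B_1$ full rank $=$ $B_1$ invertible, and apply Lemma \ref{lem1}.

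\begin{proof}
The rule matrix $T^{\phi}_{\mathrm{R}}$ in \eqref{T0} is an $m\times m$ block matrix with $n\times n$ blocks which is block tridiagonal: its main diagonal blocks all equal $A_1$, its superdiagonal blocks all equal $B_1$, its subdiagonal blocks equal $C_1$ except for the $(m,m-1)$ block which equals $D_1$. Hence it has exactly the shape \eqref{r1} of Lemma \ref{lem1} with the common superdiagonal block $X=B_1$, with the diagonal blocks $A_k=A_1$ for all $k$, and with the lower blocks given by $C_1$ in the intermediate positions and $D_1$ in the last block row. Since $\mathbb{Z}_p$ is a field, the block $B_1$ has full rank $n$ if and only if it is invertible, so the hypothesis of Lemma \ref{lem1} is satisfied. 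Applying Lemma \ref{lem1} we obtain
\[
rank(T^{\phi}_{\mathrm{R}})=(m-1)n+rank(P_m),
\]
where $P_m$ is computed by the recursion stated in Lemma \ref{lem1}, namely $P_1=A_1$, $P_2=-D_1-A_1B_1^{-1}A_1$ when $m=2$, and in general $P_k=-P_{k-2}B_1^{-1}C_1-P_{k-1}B_1^{-1}A_1$ for $k\in\{3,\dots,m-1\}$ together with $P_m=-P_{m-2}B_1^{-1}D_1-P_{m-1}B_1^{-1}A_1$. This is the asserted formula.
\end{proof}
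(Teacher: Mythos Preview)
Your approach is exactly the paper's: recognise that $T^{\phi}_{\mathrm{R}}$ in \eqref{T0} has the block shape \eqref{r1} with $X=B_1$, note that full rank over the field $\mathbb{Z}_p$ means invertible, and apply Lemma~\ref{lem1}. The paper's proof is in fact shorter than yours and does not even spell out the recursion.

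There is, however, a bookkeeping slip in the explicit recursion you wrote down. In Lemma~\ref{lem1} the elimination proceeds from the bottom block row upward, so the block in position $(m,m-1)$ --- which in $T^{\phi}_{\mathrm{R}}$ is $D_1$ --- plays the role of the lemma's ``$B_1$'' and therefore enters the recursion at the \emph{second} step, not the last. Concretely, for every $m\ge 2$ one has
\[
P_1=A_1,\qquad P_2=-D_1-A_1B_1^{-1}A_1,\qquad P_k=-P_{k-2}B_1^{-1}C_1-P_{k-1}B_1^{-1}A_1\quad(3\le k\le m),
\]
whereas you placed $D_1$ in the formula for $P_m$ and $C_1$ in the intermediate steps only. (You can see this directly: the lemma's $B_{m-1}$ sits in position $(2,1)$, which in \eqref{T0} is $C_1$; the lemma's $B_1$ sits in position $(m,m-1)$, which is $D_1$.) This does not affect the validity of the theorem or your overall argument, since the statement only asserts that $P_m$ is ``computed as in Lemma~\ref{lem1}'', but the explicit formulas you wrote should be corrected.
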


\begin{proof}
It is clear that any rule matrix $T_{{\rm Rule}}^{\phi}$ is of the form (\ref{r1}). Therefore, we may have $X=B_1$. In the case $B_1$ has the full rank, the matrix $X$ has full rank. Hence, doing the same row block operations as in Lemma \ref{lem1} we find the submatrix $P_{m}$. Thus, we obtain
\[rank(T_{{\rm R}}^{\phi})=(m-1)n+rank(P_{m}).\]
\end{proof}

Now we identify the conditions when the matrix $B_1\in\mathbf{M}_{n\times n}(\mathbb{Z}_{p})$ has the full rank.
The matrix $B_1$ has the following form:

$$B_1=
\begin{pmatrix}
f & e & 0 & \dots & 0 & 0 \\
g & f & e & \dots & 0 & 0 \\
0 & g & f & \dots & 0 & 0 \\
\vdots & \vdots & \vdots & \ddots & \vdots & \vdots \\
0 & 0 & 0 & \dots & f & e \\
0 & 0 & 0 & \dots & g+e & f
\end{pmatrix}.$$

By computing the determinant via recurrence relation in Proposition \ref{detprop} we can try to simplify the form of the determinant of $B_1$ for some cases:
\begin{itemize}

\item if $e=0$ then $\det(B_1)=f^n$.

\item if $e\neq0$ and $g=0$ then $\det(B_1)=f^{n-2}(f^2-e^2)$.

\item if $eg\neq0$, $f=e+g$ and $e\neq g$ then $\det(B_1)=\frac{e^{n}-g^{n}}{e-g}g$.

\item if $eg\neq0$, $f=e+g$ and $e=g$ then $\det(B_1)=ne^n$.
\end{itemize}

\begin{rem} We can formulate the theorem \ref{thmr} for the rule matrix $T_{\mathrm{R}}^{\phi_{180^\circ}}$ in (\ref{T180}) by help of Remark \ref{rem2}.

\end{rem}

For computing the rank of the rule matrix $T_{\mathrm{R}}^{\phi_{90^\circ}}$ in (\ref{T90}) we give the following theorem without proof. It proves like Theorem \ref{thmr}.

\begin{thm}
    Let  $T^{\phi_{90^{\circ}}}_{{\rm R}}$ be the rule matrix in (\ref{T90}). If the matrix $C_2$ and $D_2$ have the full rank, then
\[rank(T^{\phi}_{{\rm R}})=(m-1)n+rank(P_{m}),\]
where the submatrix $P_{m}$ is computed as in Remark \ref{rem2}.
\end{thm}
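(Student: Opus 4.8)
The plan is to mimic the block-elimination argument of Lemma~\ref{lem1}, but adapted to the particular block structure of the matrix $T_{\mathrm{R}}^{\phi_{90^\circ}}$ in \eqref{T90}. Unlike the matrix \eqref{T0} treated in Theorem~\ref{thmr}, here the super-diagonal block in the \emph{first} row differs ($F_2$ instead of $B_2$) and the sub-diagonal block in the \emph{last} row differs ($D_2$ instead of $C_2$); so the natural pivot block to use is not a super-diagonal block but the \emph{sub-diagonal} block $C_2$, which is constant along the whole sub-diagonal except for the final entry $D_2$. This is why the hypothesis requires \emph{both} $C_2$ and $D_2$ to have full rank.

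First I would transpose the problem: since $\mathrm{rank}(T_{\mathrm{R}}^{\phi_{90^\circ}})=\mathrm{rank}((T_{\mathrm{R}}^{\phi_{90^\circ}})^{T})$, it suffices to compute the rank of the transpose, which is upper block bidiagonal (plus the two anomalous corner blocks) with the constant block $C_2^{T}$ appearing along the super-diagonal and $D_2^{T}$ in the corner. I would then run exactly the row-block elimination of Lemma~\ref{lem1} / Remark~\ref{rem2}: starting from the top, use the full-rank pivot $C_2$ (resp.\ $C_2^{T}$) to clear the block immediately below it, producing a sequence $P_1,P_2,\dots$ defined by the recurrence in Remark~\ref{rem2} with $X=C_2$, where the first step uses $F_2$ (resp.\ $D_2$) as the anomalous entry. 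After $m-1$ elimination steps the matrix is reduced to a triangular block matrix whose diagonal consists of $m-1$ copies of the full-rank block $C_2$ together with the single residual block $P_m$ in the corner; hence $\mathrm{rank}(T_{\mathrm{R}}^{\phi_{90^\circ}})=(m-1)n+\mathrm{rank}(P_m)$.

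The one genuine subtlety — and the step I expect to be the main obstacle — is bookkeeping the two exceptional corner blocks simultaneously. In Lemma~\ref{lem1} only one end of the bidiagonal is irregular; in \eqref{T90} both the top-left region (with $F_2$) and the bottom region (with $D_2$) deviate from the periodic pattern, so I must check that the elimination sweep, started from the correct end, meets each irregularity exactly once and that no extra full-rank hypothesis beyond ``$C_2$ and $D_2$ have full rank'' is needed. Concretely, I would verify that $D_2$ is the block that plays the role of $X$ in the very first pivot step (so its invertibility is used there), while $C_2$ serves as $X$ in all remaining steps, and that $F_2$ only ever enters multiplicatively into the definition of $P_2$, never as a pivot. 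Once that alignment is confirmed, the computation is identical in form to Theorem~\ref{thmr}, and $P_m$ is produced by the same recursion as in Remark~\ref{rem2} (with the obvious substitutions $A_k\leftrightarrow A_2$, $B_k$ the appropriate off-diagonal blocks, $X=C_2$), giving the stated formula. As a final remark I would note, exactly as after Theorem~\ref{thmr}, that one can read off explicit full-rank criteria for $C_2$ and $D_2$ from their circulant-like structure by the determinant recurrences of Proposition~\ref{detprop}.
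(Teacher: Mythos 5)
Your proposal is correct and is essentially the paper's own (omitted) argument: the paper simply asserts that Theorem~\ref{thmr} carries over, i.e.\ one runs the block elimination of Lemma~\ref{lem1}/Remark~\ref{rem2} on \eqref{T90} with the sub-diagonal blocks as pivots, which is exactly why full rank of both $C_2$ and $D_2$ is assumed. Only your bookkeeping needs the obvious correction: after the sweep the full-rank triangular part has $m-2$ copies of $C_2$ and one $D_2$ on its diagonal (not $m-1$ copies of $C_2$), and depending on which end you start from, either $D_2$ is the first pivot and $F_2$ enters multiplicatively only in the final block $P_m$, or $D_2$ is the last pivot and $F_2$ enters in $P_2$ --- not both at once.
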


\begin{exam}{\label{ex1}}
	In order to illustrate the previous theorem, we take $m=4$ and $n=3$ and consider the rule matrix $T^{\phi}_\mathrm{R}$ with over the ternary field $\mathbb{Z}_{3}$. Thanks to Theorem \ref{thm0} we have
$$T_{\mathrm{R}}^{\phi}=\left(\begin{array}{ccccc}
		A_1 & B_1 & O & O\\
		C_1 & A_1 & B_1 & O\\
		O & C_1 & A_1 & B_1\\
        O & O & D_1 & A_1
	\end{array}\right),$$
	where $O$ is the zero matrix and
$$A_1=\left(
	\begin{array}{ccccc}
		0 & d & 0 \\
		h & 0 & d \\
		0 & h+d & 0 \\
	\end{array}
	\right), \quad B_1=\left(
	\begin{array}{ccccc}
		f & e & 0 \\
		g & f & e \\
		0 & g+e & f \\
	\end{array}
	\right) ,$$
 $$C_1=\left(
	\begin{array}{ccccc}
		b & c & 0 \\
		a & b & c \\
		0 & a+c & b \\
	\end{array}
	\right), \quad D_1=\left(
	\begin{array}{ccccc}
		b+f & e+c+g & 0 \\
		g+a & b+f & c+e \\
		0 & g+a+e+c & b+f \\
	\end{array}
	\right),$$
with $a, b, c, d, e, f, g, h\in \mathbb{Z}_3$. 	

	Now, let $a=b=c=d=e=f=g=h=1$. Then,
$$A_1=\left(
	\begin{array}{ccccc}
		0 & 1 & 0 \\
		1 & 0 & 1 \\
		0 & 2 & 0 \\
	\end{array}
	\right), \quad B_1=C_1=\left(
	\begin{array}{ccccc}
		1 & 1 & 0 \\
		1 & 1 & 1 \\
		0 & 2 & 1 \\
	\end{array}
	\right) ,\quad D_1=\left(
	\begin{array}{ccccc}
		2 & 0 & 0 \\
		2 & 2 & 2 \\
		0 & 0 & 2 \\
	\end{array}
	\right).$$
Since $B_1$ has the full rank (i.e. $\det B_1=1$) we find the matrix $P_{4}$ as in the proof of Theorem \ref{thmr}. According to the recurrence relation in Lemma \ref{lem1}, we get the following matrices:
$$
		P_{1}=\left(
		\begin{array}{ccc}
			0 & 1 & 0 \\
			1 & 0 & 1 \\
			0 & 2 & 0 \\
		\end{array}
		\right), \ \ P_{2}=\left(
		\begin{array}{ccc}
			1 & 0 & 2 \\
			2 & 2 & 2 \\
			1 & 0 & 0 \\
		\end{array}
		\right),$$
		$$ P_{3}=\left(
		\begin{array}{ccc}
			2 & 0 & 2 \\
			0 & 0 & 0 \\
			1 & 0 & 0 \\
		\end{array}
		\right), \ \ P_{4}=\left(
		\begin{array}{ccc}
			2 & 0 & 1 \\
			1 & 1 & 1 \\
			0 & 2 & 1 \\
		\end{array}
		\right).$$
Therefore, by applying the algorithm for computing the rank of the rule matrix $T_{\mathrm{R}}^{\phi}$ (see Theorem \ref{thmr}) we have
\[rank(T_{\mathrm{R}}^{\phi})=(4-1)\cdot3+rank(P_{4})=9+2=11.\]
	
	Hence, the rule matrix $T_{\mathrm{R}}^{\phi}$ does not have full rank. This implies that it is not reversible. In other words the mapping $T_{\mathrm{R}}^{\phi}$ is not surjective, i.e. there exists a configuration of CA which is not an image of any configuration. Moreover, for this CA there exists Garden of Eden.
\end{exam}

In a cellular automaton, a Garden of Eden is a configuration that has no predecessor. It can be the initial configuration of the automaton but cannot arise in any other way. The successor of a configuration is another configuration, formed by applying the update rule simultaneously to every cell. The transition function of the automaton is the function that maps each configuration to its successor. If the successor of configuration $\mathbf{X}$ is configuration $\mathbf{Y}$, then $\mathbf{X}$ is a predecessor of $\mathbf{Y}$. A configuration may have zero, one, or more predecessors, but it always has exactly one successor. A Garden of Eden is defined to be a configuration with zero predecessors.

As it was mentioned in the introduction, computing the GOE configurations of a cellular automaton is an important notion in cellular automata theory.
  Moore introduced this notion in\cite{Moore1962}. Recently, Ying et al.\cite{Ying2009} have computed the number of GOE configurations of a 2D cellular automaton over the binary field $\mathbb{Z}_{2}$.


\section{Conclusions}

We investigate 2D finite linear Moore CA with some mixing boundary conditions over the $p$-ary field $\mathbb{Z}_p$ (i.e. $p$-state spin values). We construct the transition rule matrix corresponding to the boundary condition for Moore CA. Characterization problems for 2D finite linear Moore CA are analyzed for $p$-ary spin cases. As we noted  above,  it is impossible to simulate a truly infinite lattice on a computer.  Usually, for 2D CA, only one of the boundary conditions was considered: null, periodic, adiabatic, and reflexive. In \cite{Jumaniyozov2023} new type boundary condition was introduced as the mixed boundary condition. There, mixed boundary condition contains all four boundary conditions. In this paper, we generalize the mixed boundary condition. Suppose the boundary condition contains just two boundary conditions. For instance, one of the adjacent sides of the lattice has a null boundary condition, another adjacent sides has reflexive. There may be other cases. It may consider boundary conditions of opposite sides of the lattice are the same. Or three different types boundary conditions use on the sides of the lattice. But, in the paper we discussed mixed boundary conditions as an example above.
Studying the rest types of boundary conditions have been planned by us in the next papers.

A known fact is that determining the reversibility of 2D CA is a very difficult problem considering the general reversibility case. Reversibility is one of the main characterization of CA. If CAs are irreversible, then there is such a configuration that their pre-image does not exist. This configuration is called a Garden of Eden. We mentioned such kind of CA in Example \ref{ex1}. Given special $m,\ n$ values, and some number of rows and columns of the transition matrix, we develop an algorithm for computing the rank of the rule matrix with new types of boundary conditions for Moore neighborhood. Therefore, some special 2D CA is reversible and some of them are irreversible as given in Example \ref{ex1}. We present how to obtain the reversibility of 2D CA over the field $\mathbb{Z}_p$ elements. Then the reversibility problem of 2D Moore CA with a given mixed boundary condition is completely solved.

\section*{Acknowledgments}This work has been supported by  Ministry of Innovative Development of Uzbekistan, grant FZ-20200930492.

\end{document}